\documentclass[11pt]{article}

\usepackage{amsmath,amssymb,amsthm}
\usepackage{graphicx}
\usepackage{subfigure}
\usepackage{enumerate}
\usepackage{fullpage}

\newenvironment{packed_enum}{
\begin{itemize}
  \setlength{\itemsep}{1pt}
  \setlength{\parskip}{0pt}
  \setlength{\parsep}{0pt}
}{\end{itemize}}

\newtheorem{theorem}{Theorem}

\newtheorem{problem}{Problem}
\newtheorem{proposition}{Proposition}
\newtheorem{question}{Question}
\newtheorem{conjecture}{Conjecture}

\newtheorem{lemma}[theorem]{Lemma}

\newcommand{\answerCommand}{}%
  {\renewcommand{\answerCommand}{#1\\}%
   \noindent\textbf{\answerCommand}%
   }{\\}%

  {\renewcommand{\answerCommand}{#1}%
   \noindent\textbf{\answerCommand}%
   }{\\}%

\title{On Polygons Excluding Point Sets}
\author{Radoslav Fulek\thanks{Ecole Polytechnique F\'ed\'erale de Lausanne. Email:~\texttt{radoslav.fulek@epfl.ch}}
 \and Bal\'{a}zs Keszegh\thanks{Alfr\'{e}d R\'{e}nyi Institute of Mathematics, Ecole Polytechnique F\'ed\'erale de Lausanne.
 Partially supported by grant OTKA NK 78439. Email:~\texttt{keszegh@renyi.hu}} \and
Filip Mori\'{c} \thanks{Ecole Polytechnique F\'ed\'erale de Lausanne. Email:~\texttt{filip.moric@epfl.ch}}
 \and Igor Uljarevi\'{c} \thanks{Matemati\v{c}ki fakultet Beograd. Email:~\texttt{mm07179@alas.matf.bg.ac.rs}}}
\date{}

\begin{document}

\maketitle

\pagenumbering{arabic} \setcounter{page}{1}

\begin{abstract}
By a polygonization of a finite point set $S$ in the plane we understand a simple  polygon 
 having $S$ as the set of its vertices. 
Let $B$ and $R$ be sets of blue and red points, respectively, in the plane such that $B\cup R$
 is in general position, and the convex hull of $B$ contains
$k$ interior blue points and $l$ interior red points.
 Hurtado et al. found sufficient conditions for the existence of a blue polygonization
that encloses all red points. We consider the 
dual question of the existence of a blue polygonization
 that excludes all red points  $R$. 
 We show that there is a minimal number $K=K(l)$, which is polynomial in $l$,
 such that one can always find a blue polygonization excluding
 all red points, whenever $k\geq K$. Some other related problems are also considered. 
\end{abstract}

\section{Introduction}

Let $S$ be a set of points in the plane in \emph{general position},
 i.e. such that no three points in $S$ are collinear. A \emph{polygonization} of $S$ is a simple (i.e. closed and non-self-intersecting)
polygon 
$P$ such that its vertex set is $S$.  

Polygonizations have received much attention recently.
One direction of research is to find good upper and lower bounds 
on the number of polygonizations of a given point set. 
This problem was raised in 1979 by Akl \cite{Akl}, who proved 
an exponential lower bound in the number of points $n$,
 and in 1980 by Newborn and Moser \cite{Newborn}, who conjectured an exponential upper bound.
The first exponential upper bound was established in 1982 by Ajtai, Chv\'atal, Newborn, and Szemer\'edi \cite{Ajtai}
for the number of crossing free graphs on $n$ points,
 and for polygonizations Sharir and Welzl \cite{Sharir} gave the upper bound $86.81^n$.

Another popular direction of research is to find a 
 polygonization of $S$, which maximizes or minimizes a given function. 
The exponential number of possible polygonizations of a given point set makes this problem in most cases hard.
Several such functions, some of which are geometric, e.g. the area of the polygon, were considered. 
Fekete in \cite{Fekete} considers the hardness of the problem of finding a polygonization of a point set that minimizes
or maximizes the enclosed area, and he proves that finding such polygonizations is NP-complete.
The other NP-complete problem is finding a polygonization of a point set that minimizes the perimeter \cite{NP}, which is the famous Euclidean Travelling Salesman problem.
This problem has a polynomial time approximation scheme \cite{Arora}.
Other functions are more of a combinatorial flavor,
 such as reflexivity (the number of reflex vertices of a polygonization), 
which measures how convex a polygon is.
 Algorithms finding a polygonization with not too many reflex vertices are presented in \cite{AFHMNSS03}
 and in \cite{reflex}. An important set of these problems originates in the study of red-blue separations.
% In \cite{Czyzowicz, Hurtado} they consider the following problem.
We have a set of blue points which has to be polygonalized 
and we have a set of red points, which restricts the polygonization in some way.
One natural kind of restriction was considered in \cite{Czyzowicz, Hurtado}.
We state it after introducing some definitions.

We say that a polygon $P$ \emph{encloses} a point set $V$ if all the points of $V$ 
belong to the interior of $P$. If all the points of $V$ belong to the exterior of $P$, then we say that $P$ \emph{excludes} $V$.
 Let $B$ and $R$ be disjoint point sets in the plane such that $B\cup R$ is in general position.
 The elements of $B$ and $R$ will be called \emph{blue} and \emph{red} points, respectively.
 Also, a polygon whose vertices are blue is a \emph{blue polygon}.
 A polygonization of $B$ is called a \emph{blue polygonization}.
 Throughout the paper in the figures we depict a blue point by a black disc, and a red point by 
 a black circle.
 
 Let $Conv(X)$ denote  the convex hull of a subset $X\subseteq \mathbb{R}^2$.
 By a \emph{vertex} of $Conv(X)$ we understand a 0-dimensional face on its boundary.
 We assume that all the red points belong to the interior of $Conv(B)$,
 since we can disregard red points lying outside $Conv(B)$ for the problems we consider.
 Let $n\geq 3$ denote the number of vertices of $Conv(B)$, 
 $k\geq 1$ the number of blue points in the interior of $Conv(B)$,
 and $l\geq 1$ the number of red points (which all lie in the interior of $Conv(B)$ by our assumption). 
 
In \cite{Czyzowicz, Hurtado} the problem of finding a blue polygonization that encloses the set 
$R$ was studied, and in \cite{Hurtado} Hurtado et al. showed that if the number of vertices of $Conv(B)$ is bigger then the number of red points,
 then there is a blue polygonization enclosing the set $R$. Moreover, they showed by a simple construction
 that this result cannot be improved in general.

% and the following tight result was obtained in \cite{Czyzowicz}:
% \begin{theorem}[\cite{Czyzowicz}]
% \label{thm:Hurtado}
% Let $B$ and $R$ be blue and red point sets in the plane such that $B\cup R$
%  is in general position and $R$ is contained in the interior of $Conv(B)$.
%  If the number of vertices of $Conv(B)$ is bigger then the number of red points,
%  then there is a blue polygonization enclosing the set $R$.
% \end{theorem}

Using the earlier terminology, we can say that they want to
 {\em maximize} the function defined to be the number of red points inside
 the polygon and prove that if we have enough blue points on the convex hull,
then this function can reach $l$. They also show that there always exists a polygonization 
of the blue points, which encloses at least half of the red points, i.e. if the number of red points is  
$l$, for a given point set, then the maximum of the function is always at least $l/2$.

We propose to study the problem of {\em minimizing} the same function, and as a starting point for that
we were interested in the following question.
% and conjecture the following.
%We are interested in a very similar restriction where the existence of a desired polygonization 
%is again not evident. Our main goal in this paper is to study the problem of finding a blue polygonization that 
%excludes all the points in $R$ (see Figure \ref{fig:IntroExample}).
%In particular, we solve the following problem:

%%%

%\begin{conjecture} \label{conj:main}
% { \rm (Total Polygonization)} 
%Let $B$ and $R$ be blue and red point sets in the plane such that $B\cup R$
% is in general position and $R$ is contained in the interior of $Conv(B)$.
% If the number of red points is $l$ and the number of blue points not on the convex hull is $k\ge 2l-1$
% then there is a blue polygonization excluding the set $R$.
%\end{conjecture}

%%%

%\begin{problem}{\rm (Total polygonization)}
%\label{prb:main}
%Is there for any $l>0$ a minimal number $K=K(l,n)$, depending only on $l$ and $n$,
%such that if  the number $k$ of blue points in the interior of $Conv(B)$ 
%satisfies $k\geq K$, then there exists a blue polygonization excluding all the red points? 
%\end{problem}

%It is already non-evident, if there exists any upper bound on the number of inner blue points that guarantees a polygonalization,
% which excludes all the red points (see Figure \ref{fig:IntroExample}), i.e. the problem is the following.

%In Section \ref{total} Theorem \ref{thm:main2} gives such an upper bound, which is polynomial only in $l$.

Is there for every $l$ a minimal number $K(l)$, such that having at least $K(l)$ 
blue points in the interior of $Conv(B)$ is a sufficient condition
for the existence of a blue polygonization of $|B|=n+k$ that excludes $R$, if $|R|=l$ (see Figure \ref{fig:IntroExample} for an illustration) ?
%The above question simply asks,
%whether having more interior blue points is always at least as good for finding a polygonization of $B$ excluding $R$ as having
%less of them.

In Section \ref{total} we answer the above question affirmatively by proving the following theorem.%in Section \ref{sec:total} 
\begin{theorem}
\label{thm:main2}
$ K(l)=O(l^4)$, i.e. there exists a $K_0(l)=O(l^4)$, so that 
if $|B|=n+k$, $k\geq K_0(l)$, $|R|=l$, and vertices of the convex hull of $B\cup R$ are $n$ blue points,
 then there exists a blue polygonization that excludes all the red points. 
\end{theorem}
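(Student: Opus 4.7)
The overall strategy is to produce the blue polygonization in two stages. In the first stage, I construct a simple blue polygon $Q$ on a small subset $B^{*}\subseteq B$ of vertices that already excludes every red point, and that moreover contains all remaining blue points $B \setminus B^{*}$ in its interior. In the second stage, I insert these remaining blue points as additional vertices of $Q$ by local ``ear'' modifications that indent $Q$ slightly inward at the insertion site. The second stage is the easy one: since the interior of $Q$ is free of red points, a sufficiently thin inward detour through any blue interior point adds that point as a vertex without admitting any red point to the interior, and all such detours can be made small enough and disjoint.

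The heart of the proof is the construction of $Q$. I start from the boundary polygon of $Conv(B)$, which encloses every red point, and I introduce one ``fjord'' (a deep V-shaped indentation) per red point that ejects that point into the exterior, subject to two compatibility constraints: no fjord may accidentally catch another red point, and distinct fjords must be pairwise disjoint. Each fjord uses one blue vertex near the target red point (the tip of the V) and two blue vertices near the convex hull (the shoulders). To choose the tips consistently, I use the arrangement $\mathcal{A}$ of the $\binom{l}{2}=O(l^{2})$ lines through pairs of red points, which partitions $Conv(B)$ into $O(l^{4})$ faces. Inside each face $C$, the combinatorial ``above/below'' relation of $C$ with every red line is fixed, so whether $C$ is a legal tip location for excluding a particular $r_i$ without catching any $r_j$ is a property of the face. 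A pigeonhole argument on $k \geq K_{0}(l)=\Theta(l^{4})$ blue interior points guarantees that every face that needs to contribute a tip is actually populated, which is precisely where the $O(l^{4})$ bound enters.

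The main obstacle is the requirement that the $l$ fjords be simultaneously pairwise disjoint and individually innocent of all other red points: selecting each fjord in isolation is easy, but the selections must be consistent. To handle this, I plan to process the red points in an order dictated by a sweep of $\mathcal{A}$ (for instance by $x$-coordinate, or outward from the centroid of $R$), so that each new fjord is placed either in a sector not yet used by previous fjords, or nested inside a previous one in a way that preserves simplicity. The detailed case analysis of which cells of $\mathcal{A}$ must be populated, together with the inductive construction of the nested fjord system, is where the polynomial bound in $l$ is calibrated and where the bulk of the argument lives. Once $Q$ has been drawn and Stage 2 is executed, the resulting simple polygon has every blue point as a vertex and excludes $R$, proving $K(l) = O(l^{4})$.
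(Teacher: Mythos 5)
Your high-level plan (fjord per red point, tips chosen from cells of a line arrangement, pigeonhole on $\Theta(l^4)$ interior blue points) differs from the paper's proof, but it contains a concrete error and leaves the genuinely hard step unproved. The error is the pigeonhole claim: having $k=\Theta(l^4)$ interior blue points distributed among the $O(l^4)$ faces of the arrangement $\mathcal{A}$ guarantees only that \emph{some} face contains $\Omega(1)$ blue points; it cannot guarantee that ``every face that needs to contribute a tip is actually populated,'' because nothing prevents all $k$ interior blue points from lying in a single face of $\mathcal{A}$, no matter how large $k$ is. This is precisely why the paper's proof is organized the other way around: it first shrinks the relevant hull to at most $2l+1$ vertices (Proposition \ref{prop:smallerHull}), then locates \emph{one} cell of a suitable arrangement containing $\Omega(l^2)$ or $\Omega(l^3)$ blue points, and builds the entire polygonization from that single cluster using the zoo/nice-partition machinery (Lemmas \ref{lemma:capCup} and \ref{lemma:capCup2}), in which convex pieces are peeled off by tangent lines between the convex hull of the remaining red points and the blue cluster, so that every blue point is used and every red point ends up on a boundary of a convex piece.

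The second gap is that the heart of your argument --- choosing the $l$ fjords so that they are pairwise disjoint, each catches its own red point, and none catches another red point or strands a blue point outside the polygon --- is only announced (``sweep order,'' ``nested fjords'') and not carried out; this simultaneous consistency is exactly the difficulty the paper's recursion is designed to overcome, and it is also where the open Conjecture \ref{conj:main} (that $2l-1$ interior points suffice) lives. Moreover the ``thin fjord'' picture is not consistent with the constraint that every vertex of the polygon must be a blue point: a fjord must attach to the current boundary at two blue vertices, so the carved-out region is a full triangle (or worse) spanned by those vertices and the tip, which can be made thin only if two blue points happen to lie close together near the desired attachment site --- and no such points are guaranteed to exist. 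The same objection applies to your Stage 2: an inward detour from an edge $uv$ to an interior blue point $p$ replaces $uv$ by $u\,p\,v$ and removes the whole triangle $upv$, which cannot be ``made small'' and may contain other blue points you still need to insert or may leave the polygon; so even granting a red-free polygon $Q$, the insertion step needs an actual inductive argument rather than a thinness appeal. As it stands, the proposal does not yield a proof of the $O(l^4)$ bound.
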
	

In Theorem \ref{thm:lowerbound} (Section \ref{total}) we give a complementary lower bound on $K(l)$, which shows $K(l)\ge 2l-1$.
Note that the property, which matters in the problem of enclosing red points \cite{Czyzowicz, Hurtado},
 is the number of blue vertices on the hull of $Conv(B)$,
while in our problem it seems that all that matters, is the number of blue points in the interior of $Conv(B)$.
Even though it is still possible that $K(l)$ depends on $n$, the evidence we have suggests that this is not the case.
Moreover, we conjecture that our lower bound on $K(l)$ is tight.

\begin{conjecture} \label{conj:main}
{ \rm (Total Polygonization)} 
Let $B$ and $R$ be blue and red point sets in the plane such that $B\cup R$
is in general position and $R$ is contained in the interior of $Conv(B)$.
If the number of red points is $l$ and the number of blue points not on the convex hull is $k\ge 2l-1$
then there is a blue polygonization excluding the set $R$.
\end{conjecture}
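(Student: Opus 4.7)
The natural plan is induction on $l$, strengthening the inductive hypothesis to exactly the statement of Conjecture \ref{conj:main}. The base case $l=0$ is trivial (insert the interior blue points one by one into any edge of the current polygon, starting from $Conv(B)$). The case $l=1$, $k=1$ is the key model that already suggests the correct idea: pick the single interior blue point $b$, triangulate $Conv(B)$ as a fan from $b$, and let $v_j v_{j+1}$ be the unique hull edge whose fan-triangle contains $r$; replacing $v_j v_{j+1}$ by the broken path $v_j b v_{j+1}$ yields a simple blue polygon with a concavity at $b$, and $r$ lies in the excised triangle. Additional interior blue points can then be inserted freely along the remaining boundary without disturbing the excluded status of $r$.

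For the inductive step ($l\geq 2$, $k\geq 2l-1$) the plan is to \emph{peel off one red point using two interior blue points}, thereby reducing to an instance with $l-1$ red points and $k-2\geq 2(l-1)-1$ interior blue points, to which the inductive hypothesis applies. I would take an extremal red point $r^\ast$, say the one of minimum $y$-coordinate, and try to build a pocket that isolates $r^\ast$ from both the remaining red points (which all lie above the horizontal line through $r^\ast$) and from the intended future polygon on the other red points. Concretely, one seeks two interior blue points $b_1,b_2$ and a sub-arc $v_i v_{i+1}\dots v_j$ of the convex hull, all lying on the ``$r^\ast$ side'' of the configuration, such that the closed region bounded by the broken line $b_1,v_i,\dots,v_j,b_2$ and the chord $b_1b_2$ contains $r^\ast$ but no other red point. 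One then uses that region as a concave pocket of the polygon: the remaining $k-2$ interior blue points and $l-1$ red points live on the other side of $b_1b_2$, and the inductive hypothesis provides a polygonization of the induced sub-instance that can be glued to the pocket along $b_1b_2$ to yield a valid blue polygonization of all of $B$ excluding $R$.

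The main obstacle is of course proving that such a pair $(b_1,b_2)$ and arc always exist when $k\geq 2l-1$, and that the recursive gluing can be carried out without creating self-intersections. The extremality of $r^\ast$ guarantees a red-free half-plane on one side, which is the geometric lever; the combinatorial lever is the pigeonhole that $k\geq 2l-1$ interior blue points must contain two that can serve as $b_1,b_2$ simultaneously satisfying (i) no other red point falls into the pocket and (ii) after the peel, the remaining configuration still has its interior blue points in convex position relative to the reduced hull. The ``$-1$'' in $2l-1$ is accounted for by the fact that the base case $l=1$ needs only one interior blue point, not two. Making this pigeonhole argument tight against arbitrary placements of red points seems to be the central combinatorial-geometric difficulty, and a full proof likely requires a careful case analysis distinguishing how many red points project into the vertical slab just above $r^\ast$, together with a direction-sweep around $r^\ast$ to locate the pocket-forming pair; this is presumably why the authors state the bound only as a conjecture and prove the weaker polynomial bound in Theorem \ref{thm:main2}.
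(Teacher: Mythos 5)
First, a point of order: the statement you are proving is Conjecture~\ref{conj:main}, which the paper explicitly leaves \emph{open}. The paper proves only the much weaker sufficiency bound $K(l)=O(l^4)$ (Theorem~\ref{thm:main2}, via zoos, nice partitions, and Lemma~\ref{lemma:capCup2}) and the matching-in-spirit lower bound $k\le 2l-2$ (Theorem~\ref{thm:lowerbound}); there is no proof of the conjecture to compare against. Your proposal does not close that gap: you yourself concede that ``the main obstacle is of course proving that such a pair $(b_1,b_2)$ and arc always exist,'' and that step is precisely the entire content of the conjecture. A plan whose central existence claim is left as an acknowledged difficulty is not a proof.

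Beyond the admitted hole, the induction scheme itself has structural problems. (i) After peeling the pocket along the chord $b_1b_2$, the residual configuration is not an instance of the conjecture: the remaining red points must lie in the interior of the convex hull of the \emph{remaining} blue points, and interior points of $B$ may become hull vertices of the reduced set, so the count ``$k-2$ interior blue points'' is not justified; worse, the inductive hypothesis returns \emph{some} polygonization of the sub-instance, with no guarantee that $b_1b_2$ (or any edge compatible with your pocket) appears on it, so the gluing step is not available. (ii) The pocket region bounded by $b_1,v_i,\dots,v_j,b_2$ may contain further interior blue points, which then belong neither to the pocket path nor to the ``other side'' of $b_1b_2$. (iii) The tightness of $2l-1$ is delicate --- in the paper's lower-bound construction every lower blue vertex must be flanked by two interior blue vertices --- and your accounting of the ``$-1$'' via the base case does not engage with this; a peel-two-per-red argument with a one-point base case would equally ``prove'' bounds that Theorem~\ref{thm:lowerbound} rules out unless the geometry is controlled much more carefully than you indicate. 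In short, the approach is a reasonable heuristic for why $2l-1$ might suffice, but it is not a proof, and the statement remains a conjecture.
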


In the previous problem we were interested in finding a polygonization for \emph{all} blue points. 
As a variant to this problem, it is natural to ask, what is the smallest number of inner blue points,
that we can use in a polygonization visiting all the vertices of $Conv(B)$, so that all red points are excluded. 
One can easily see that it is not always possible to find a simple blue $(2n-1)$-gon excluding all the red points,
no matter how large the number of inner blue points is, as soon as $|R|=l\geq n$ holds ($n$ is the number of vertices of $Conv(B)$).
Indeed, if we put a red point very close to every side of the convex hull $Conv(B)$, we make sure that in any blue 
 polygonization we cannot use any side of the convex hull,
 which implies that at least $n$ inner blue points must be sometimes used. 
We remark that the same construction was used in \cite{Hurtado}
to justify the tightness of their main result mentioned above.

So, it is natural to ask the following question. 

%\begin{problem}{\rm (Minimal polygonization)}
%\label{prb:main2}
%Let $n$ denote the number of vertices of $Conv(B)$,
%$k$ the number of blue points in the interior of $ConvB$ and $l$ the number of red points.
% We assume that all red points belong to the interior of $Conv(B)$. If $n$ and $l$ are fixed, is it true that for every large enough $k$ 
%there exists a simple blue $2n$-gon avoiding all red points and such that all the $n$ vertices
%of $Conv B$ are also its vertices? 
Is there for any $l>0$ a minimal number $K'=K'(l,n)$
such that if the number $k$ of blue points in the interior of $Conv(B)$ 
satisfies $k\geq K'$, then there exists a blue polygonization 
of a subset of $B$ of size at most $2n$ excluding all the red points? 
%\end{problem}

Section \ref{sect:minimal} deals with this question, and we answer it affirmatively in Theorem \ref{thm:igor}.

Finally, in Section \ref{sect:redblue} we tackle the following closely related problem.  
Given $n$ red and $n$ blue points in general position, we want to draw a polygon separating the two sets,
 with minimal number of sides. Theorem \ref{thm:redblue} shows that we can always find such a separating polygon
 using $3\lceil n/2\rceil$ sides and we also show that it is not always possible to draw such a polygon with less than $n$ sides.

\begin{figure}[h]
\centering
  \subfigure[]{\label{fig:IntroExample} 
		\includegraphics[scale=0.55]{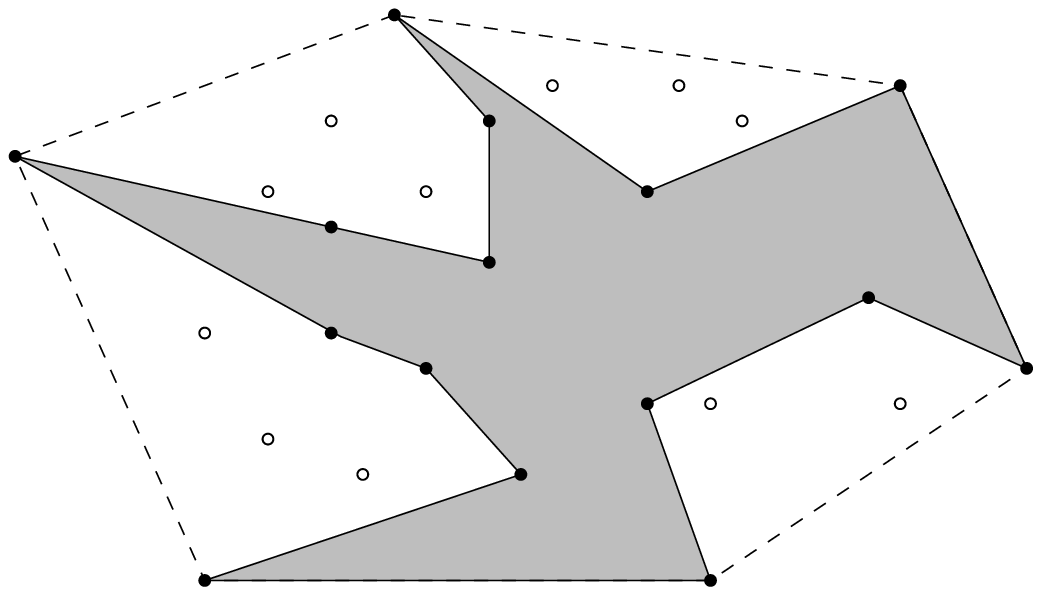}
        \hspace{5mm} 
      }
  \subfigure[]{\label{fig:partition}     
		\includegraphics[scale=0.55]{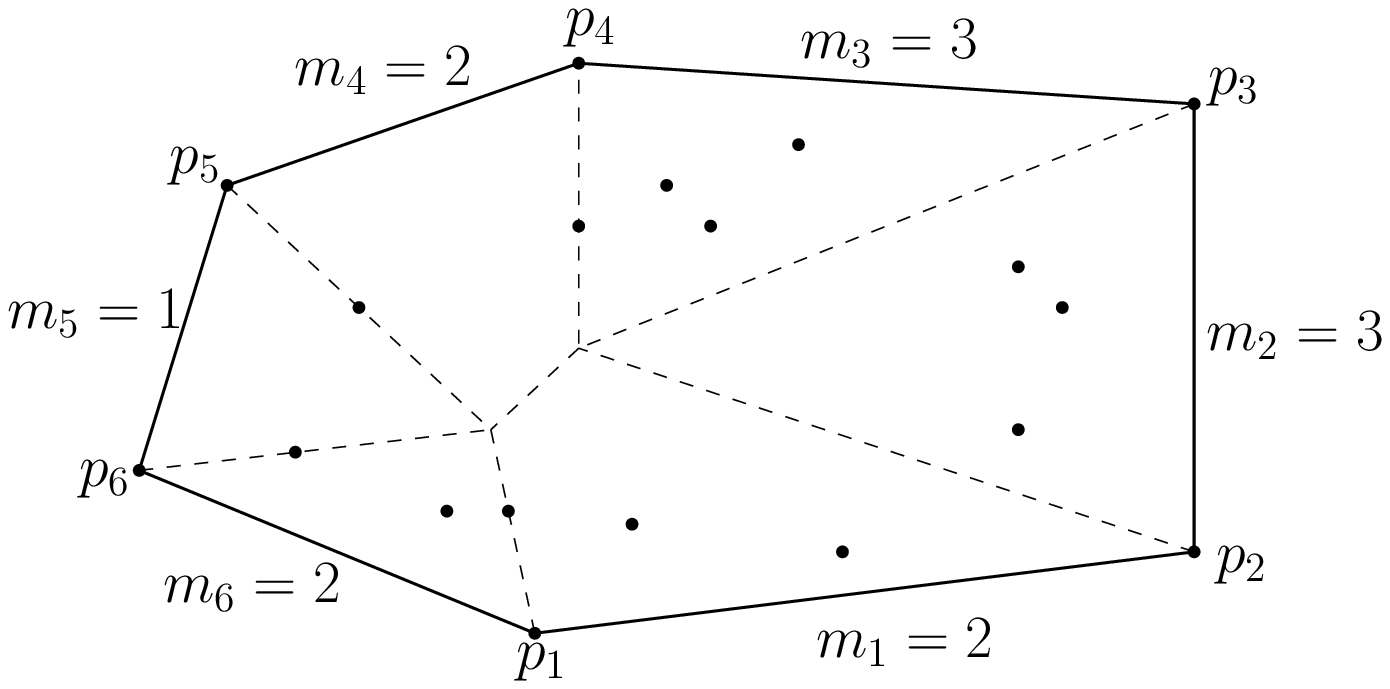}
	
	}
	
	\caption{(a) A blue polygonization excluding all the red points, (b) A partition guaranteed by Lemma \ref{lemma:partition}}
\end{figure}

\section{Preliminary results}

In this section we present several lemmas that we will use throughout the paper. Let us recall that $B$ and $R$
 denote sets of blue and red points in the plane. We will assume that they are in general position, i.e. the set $B\cup R$ does not contain three collinear points.
We will need the following useful lemma by Garc\'{i}a and Tejel \cite{Garcia} (see Figure \ref{fig:partition}).

\begin{lemma}\label{lemma:partition}{\rm (Partition lemma)}
Let $P$ be a set of points in general position in the plane and assume that $p_1,p_2,\dots,p_n$ are the vertices of the $Conv(P)$ 
and that there are $m$ interior points. Let $m=m_1+\dots+m_n$, where the
$m_i$ are nonnegative integers. Then the convex hull of $P$ can be partitioned into $n$ convex
polygons $Q_1,\dots,Q_n$ such that $Q_i$ contains exactly $m_i$ interior points (w.r.t. $Conv(P)$) and $p_ip_{i+1}$ is an edge of $Q_i$.
(Some interior points can occur on sides of the polygons $Q_1,\dots,Q_n$ and for those points we decide which region
they are assigned to.)
\end{lemma}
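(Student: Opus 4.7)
My plan is to prove Lemma~\ref{lemma:partition} by induction on $n$, the number of convex-hull vertices of $P$. For the base case $n=3$ I would handle the triangle $p_1p_2p_3$ via a secondary induction on $m$: when $m=0$ any three segments from an interior Steiner point to the vertices yield three empty convex sub-triangles with the required sides; when $m\ge 1$ I pick an index $i$ with $m_i\ge 1$ and let $q$ be the interior point of $P$ minimizing the distance to the line through $p_ip_{i+1}$, so that by general position the triangle $\triangle p_ip_{i+1}q$ contains no other interior point of $P$. I then recurse on the reduced instance with $q$ deleted and $m_i$ decreased by one, and extend the resulting $Q_i$ across the empty triangle $\triangle p_ip_{i+1}q$ to absorb $q$; since the complement of $\triangle p_ip_{i+1}q$ inside $\triangle p_1p_2p_3$ is concave at $q$, the recursion is actually applied to the two convex sub-triangles produced by the auxiliary diagonal $qp_{i+2}$, with counts redistributed appropriately.

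For the inductive step $n\ge 4$, my plan is to reduce to a convex polygon with strictly fewer hull vertices by drawing an internal chord $p_1p_k$ for some $k\in\{3,\dots,n-1\}$, splitting $Conv(P)$ into convex sub-polygons $A$ (on the hull vertices $p_1,\dots,p_k$) and $B$ (on $p_k,p_{k+1},\dots,p_n,p_1$), each of strictly smaller complexity than the original. I would select $k$, together with the freedom of assigning points lying on the chord to one side or the other, so that the interior-point counts in $A$ and $B$ agree with $m_1+\cdots+m_{k-1}$ and $m_k+\cdots+m_n$, respectively; general position guarantees that the count on either side changes by at most one as $k$ varies (or as the chord is perturbed through an interior pivot), so the correct split is reachable by discrete continuity. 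The induction hypothesis is then applied to each sub-polygon.

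The main obstacle I anticipate is the chord edge $p_1p_k$ inside the recursive calls: a naive application of the hypothesis to $A$ viewed as a convex $k$-gon would give $k$ regions rather than the desired $k-1$, with a spurious region incident to the chord. I would resolve this by placing a Steiner point $z$ on the chord $p_1p_k$ and star-partitioning $A$ from $z$ into exactly $k-1$ sub-triangles $zp_jp_{j+1}$, each already carrying the required hull edge $p_jp_{j+1}$; the would-be chord region collapses onto the segment $zp_1\cup zp_k\subseteq p_1p_k$ and is thus absorbed. Choosing $z$ so that the sub-triangle counts match $m_1,\dots,m_{k-1}$ (or, failing a single such $z$, refining each sub-triangle further by the base case and merging the resulting count-$0$ fragments across the Steiner chords) is the delicate step, but the general-position assumption together with the partition lemma's built-in freedom to assign on-boundary points should make it work, closing the induction.
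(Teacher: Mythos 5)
First, a point of reference: the paper does not prove this lemma at all --- it is quoted from Garc\'ia and Tejel \cite{Garcia} --- so there is no in-paper argument to compare yours against. Your proposal therefore has to stand on its own, and in its present form it does not: there are genuine gaps at every level of the induction.

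In the base case, after deleting $q$ and recursing, the point $q$ may land in a region $Q_j'$ with $j\neq i$, and the repair you offer does not work. The empty triangle $\triangle p_ip_{i+1}q$ and $Q_i'$ are both incident to the edge $p_ip_{i+1}$ from the same side, so they overlap in a two-dimensional set; their union need not be convex, and carving $\triangle p_ip_{i+1}q$ out of the other regions can destroy their convexity. Your fallback --- recursing on the two triangles cut off by the diagonal $qp_{i+2}$ ``with counts redistributed appropriately'' --- founders because the number of interior points on each side of $qp_{i+2}$ is dictated by the geometry, not chosen by you, and because each recursive call returns one region per edge of its sub-triangle, so the regions incident to the auxiliary edges $p_iq$, $qp_{i+1}$, $qp_{i+2}$ are spurious and cannot be merged into their neighbours without losing convexity. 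In the inductive step, the assertion that the count on either side of the chord changes by at most one as $k$ varies is false: passing from $p_1p_k$ to $p_1p_{k+1}$ transfers every interior point of the triangle $p_1p_kp_{k+1}$ at once. Rotating the chord continuously about $p_1$ does restore one-at-a-time changes, but then its second endpoint is no longer a vertex and the sub-polygon on that side no longer carries a full original edge there. Finally, the Steiner-point fix asks a single parameter (the position of $z$ on the chord) to satisfy $k-2$ independent counting constraints, which is impossible in general, and the last-resort idea of merging count-$0$ fragments across the Steiner chords again produces non-convex unions. The essential difficulty of the lemma --- making one convex cut that contains a full prescribed hull edge, captures exactly the prescribed number of points, \emph{and} leaves a convex remainder still containing all the other prescribed edges in full --- is reintroduced, unresolved, at each stage of your argument; you would need to consult the Garc\'ia--Tejel construction (or supply a correct continuity argument meeting all three requirements simultaneously) to close it.
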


The next lemma is a straightforward application of Lemma \ref{lemma:partition}.
It will be used as the main ingredient in the solution of the Minimal polygonization problem,
but we also find it interesting on its own.

\begin{lemma}{\rm (Alternating polygon lemma)}
\label{lemma:alternating}
If $|B|=|R|=n$ and the blue points are vertices of a convex $n$-gon, while all the red points are in the
interior of that $n$-gon, then there exists a simple alternating $2n$-gon, i.e. a $2n$-gon in which any two consecutive
vertices have different colors.
\end{lemma}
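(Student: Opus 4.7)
My plan is to apply the Partition lemma (Lemma \ref{lemma:partition}) with uniform weights and then assemble the alternating polygon region by region. Label the blue hull vertices $b_1,\ldots,b_n$ in cyclic order along $\partial Conv(B)$ and apply Lemma \ref{lemma:partition} to $B\cup R$ with $m_1=\cdots=m_n=1$. This yields a partition of $Conv(B)$ into convex polygons $Q_1,\ldots,Q_n$ such that $b_ib_{i+1}$ is an edge of $Q_i$ and exactly one red point, call it $r_i$, is assigned to $Q_i$ (using the freedom provided by the lemma to assign any red point that lies on a shared side).

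I would then declare $P^\ast$ to be the closed polygonal curve with cyclic vertex sequence
\[
b_1,\; r_1,\; b_2,\; r_2,\; \ldots,\; b_n,\; r_n.
\]
Alternation of colors is immediate from the construction, so the task reduces to verifying that $P^\ast$ is simple. Its $2n$ edges come in $n$ consecutive pairs, one pair $b_ir_i$ and $r_ib_{i+1}$ per region; both members of the $i$-th pair lie in the convex set $Q_i$ since $\{b_i,b_{i+1},r_i\}\subseteq Q_i$. Because the $Q_i$ have pairwise disjoint interiors, any crossing between edges sitting in different regions would have to occur on a shared boundary segment of the partition, while inside a single $Q_i$ the two edges only meet at the shared endpoint $r_i$.

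The one delicate point, which I expect to be the main (and really only) obstacle, is ruling out incidences along these shared boundary segments of the partition. A segment $b_ir_i$ or $r_ib_{i+1}$ drawn inside $Q_i$ either has its relative interior entirely in $\operatorname{int}(Q_i)$, or it is itself an edge of $Q_i$ and hence also an edge of the neighbouring region $Q_j$. Since general position forbids any three points of $B\cup R$ from being collinear, such a shared edge meets the two segments of $P^\ast$ drawn inside $Q_j$ only at their common endpoints, never at a relative interior point. The same collinearity argument rules out the only remaining degeneracy, namely that some $r_i$ lying on a shared side of the partition could coincide with an interior point of one of the segments $b_jr_j$ or $r_jb_{j+1}$: general position forces such an $r_i$ to be a common vertex of the incident $Q_j$'s rather than a relative interior point of a side, and no segment of $P^\ast$ can then pass through it. Combining these checks shows $P^\ast$ is a simple alternating $2n$-gon.
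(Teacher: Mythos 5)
Your proposal is correct and follows essentially the same route as the paper: apply the Partition lemma with $m_1=\cdots=m_n=1$ and read off the alternating polygon $b_1r_1b_2r_2\dots b_nr_n$ from the resulting convex parts. The only difference is that you spell out the simplicity verification that the paper dismisses with ``one can easily see,'' and your checks (open segments between a boundary point and an interior point of a convex $Q_i$ lie in $\operatorname{int}(Q_i)$, plus general position for the degenerate boundary cases) are sound.
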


\begin{proof}
Let $B=\{b_1,\dots,b_n\}$ and $R=\{r_1,\dots,r_n\}$. If we apply Lemma
 \ref{lemma:partition} with $m_1=\dots=m_n=1$, we will get a partition of the blue $n$-gon into $n$ 
convex parts $Q_1,\dots,Q_n$, each of them containing exactly one red
 point and such that $b_ib_{i+1}$ is an edge of $Q_i$ for each $i$. 
Without loss of generality we can assume that $r_i\in Q_i$.
 Then one can easily see that $b_1r_1b_2r_2\dots b_nr_n$ is the desired alternating polygon
 (see Figure \ref{fig:alternating}).
\end{proof}

\begin{figure}[h]
\centering
  \subfigure[]{\label{fig:alternating}     
		\includegraphics[scale=0.55]{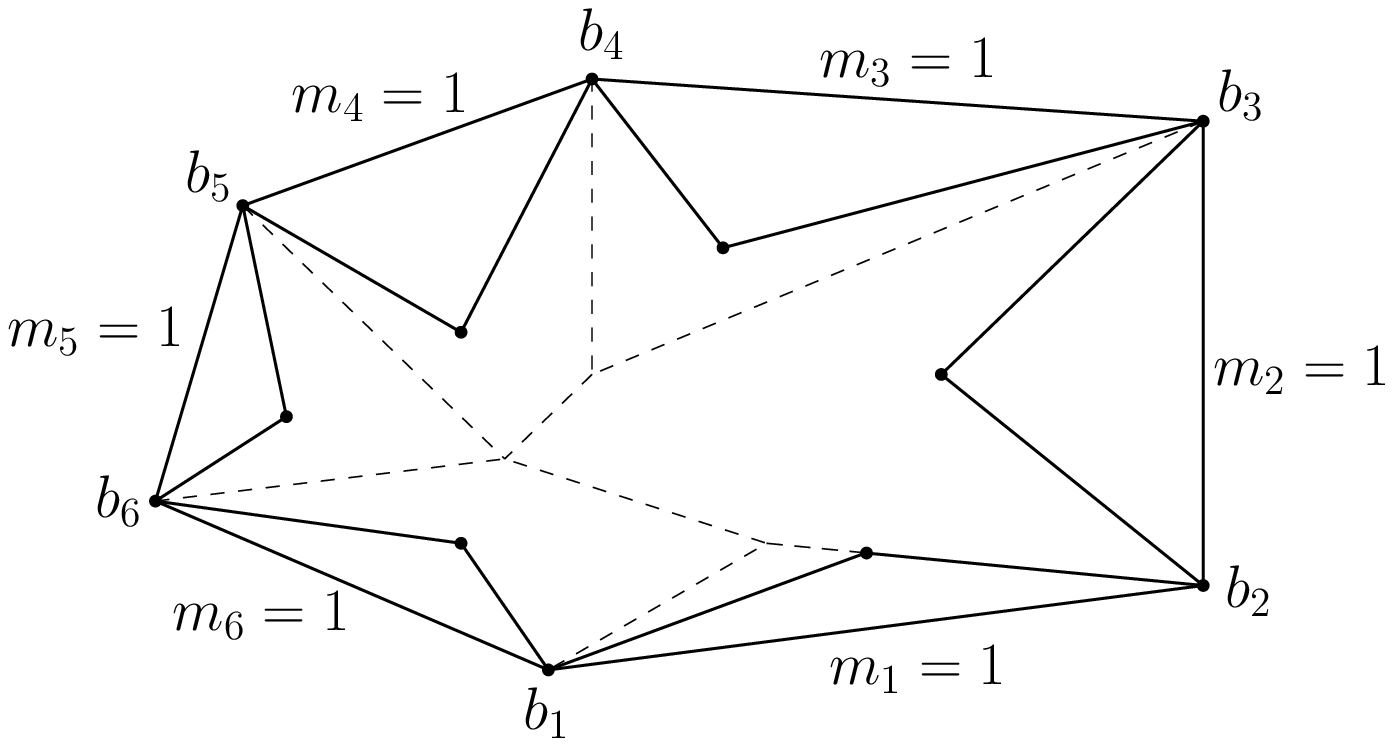}
       
        \hspace{5mm} 
      }
  \subfigure[]{\includegraphics[scale=0.55]{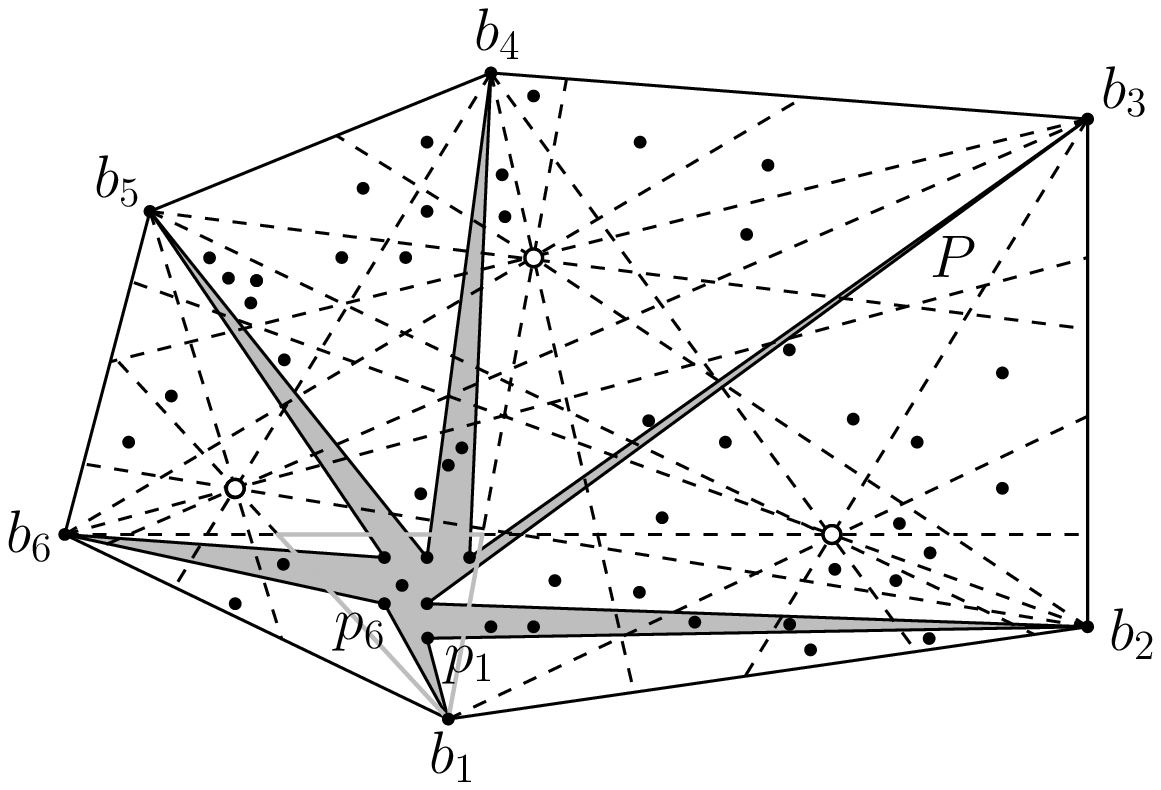}
	\label{fig:igor}
	}
	
	 \caption{(a) A polygon obtained by Lemma \ref{lemma:alternating}, (b) Alternating polygon} 
\end{figure}

In the solution of the Total polygonization problem we will be making a 
polygon by concatenating several polygonal paths obtained by the following proposition,
 which is rather easy. 

\begin{proposition}
\label{lemma:path}
Let $S$ be a set of $n$ points in the plane in general position and $p$ and $q$ two points from the convex hull of $S$.
Then one can find a simple polygonal path whose endpoints are $p$ and $q$ and whose vertices are the $n$ given points.
\end{proposition}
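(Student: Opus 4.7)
The plan is to proceed by induction on $n = |S|$. The base case $n = 2$ is immediate: $S = \{p,q\}$ and the segment $pq$ is the required path.

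For the inductive step I take $n \geq 3$. Since $p$ is a vertex of $Conv(S)$, it has two distinct neighbors along the boundary of $Conv(S)$, and because $n \geq 3$ these two neighbors are different points; hence at least one of them, call it $v$, is distinct from $q$. Set $S' := S \setminus \{p\}$. Both $v$ and $q$ are vertices of $Conv(S')$: removing $p$ from $S$ cannot push a surviving point into the strict interior of the new hull, and general position rules out any surviving point lying in the relative interior of a hull edge. By the inductive hypothesis applied to $S'$ with the two designated hull vertices $v$ and $q$, I obtain a simple polygonal path $P'$ from $v$ to $q$ whose vertex set is $S'$. Prepending the segment $pv$ then produces a path $P$ from $p$ to $q$ whose vertex set is $S$.

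The only remaining task, and the main (small) obstacle, is to verify that $P$ is simple, i.e., that the new segment $pv$ does not cross any edge of $P'$. For this I exploit that $pv$ is an edge of $Conv(S)$. Let $\ell$ be the line through $p$ and $v$. By general position no point of $S \setminus \{p,v\}$ lies on $\ell$, and all such points lie in the same open half-plane $H$ bounded by $\ell$. Hence every edge of $P'$ is contained in $H \cup \{v\}$: an edge not incident to $v$ has both endpoints in $H$ and so lies entirely in $H$, disjoint from $pv \subset \ell$; an edge incident to $v$ has its other endpoint in $H$, so it meets $pv$ only at the shared endpoint $v$. Therefore $P$ is a simple path and the induction goes through.
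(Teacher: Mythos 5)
Your proof is correct, but it takes a genuinely different route from the paper's. The paper gives a one-shot construction: it picks an auxiliary apex $r$ outside the point set so that the wedge $\angle prq$ has angle less than $\pi$ and contains all of $S$, sorts the points angularly around $r$, and observes that the resulting angular order yields a simple path from $p$ to $q$. You instead argue by induction, peeling off the endpoint $p$ together with a convex-hull edge $pv$ ($v$ a hull neighbour of $p$ chosen distinct from $q$), applying the hypothesis to $S\setminus\{p\}$ with designated hull vertices $v$ and $q$, and using the defining property of a hull edge --- all remaining points lie strictly in one open half-plane bounded by the line $pv$ --- to verify that the prepended segment cannot cross the inductively obtained path. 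Your verification of simplicity and of the fact that $v$ and $q$ survive as hull vertices of $S\setminus\{p\}$ is careful and complete. The trade-off: the paper's radial sort is shorter and immediately algorithmic (one sort), but it quietly relies on the existence of the apex $r$ with the stated wedge property, which it asserts without proof; your induction is more elementary and entirely self-contained, using only the one-sided property of hull edges, at the cost of being an $n$-step peeling rather than a single global construction. Both are valid proofs of the proposition.
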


\begin{proof}
Since the points $p$ and $q$ belong to the convex hull,
 there exists a point $r$ in the plane such that angle $\angle prq$
 is smaller than $\pi$ and contains all
the points from $S$. We can also suppose that $r$ does not belong to
 a line determined by two points from $S$. Let us denote the points from $S$ by
 $p_1,p_2, \ldots, p_n$, so that $p_1=p$, $p_n=q$ and $\angle prp_2<\angle prp_3<\ldots<\angle prp_{n-1}<\angle prq$
 (see Figure \ref{fig:path}).
 Then the polygonal path $p_1p_2\dots p_n$ is simple as required. 
\end{proof}

\begin{figure}[h]
    \label{fig:path}
	\includegraphics[scale=0.55]{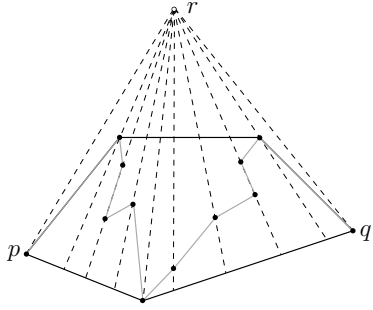}
	\centering
	\caption{A polygonal path from $p$ to $q$ passing through all vertices of $P$}
\end{figure}

In order to obtain by our method a bound on $K(l)$ ($|R|=l$) we need to take care of the situation,
when the convex hull $Conv(B)$ contains too many vertices.
For that sake we have the following proposition, which can be established quite easily.

\begin{proposition}
\label{prop:smallerHull}
There exists a subset $B'$ of $B$ of size at most $2l+1$, containing only the vertices of $conv(B)$,
so that all the red points are contained in $conv(B')$.
\end{proposition}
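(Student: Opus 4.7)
Proof proposal. My plan is to start with $B'$ equal to the full vertex set of $Conv(B)$ and repeatedly delete a vertex from $B'$ while maintaining the invariant $R\subseteq Conv(B')$, stopping once $|B'|\le 2l+1$. Given $v_i\in B'$ with cyclic neighbors $v_{i-1},v_{i+1}$ in $Conv(B')$, call $v_i$ \emph{removable} if $R\subseteq Conv(B'\setminus\{v_i\})$. Deleting $v_i$ loses exactly the open ear triangle $\triangle v_{i-1}v_iv_{i+1}$ on the $v_i$-side of the chord $v_{i-1}v_{i+1}$; since $B\cup R$ is in general position no red point can lie on such a chord, so $v_i$ is removable if and only if its ear contains no red point.

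The main combinatorial claim is: in any convex polygon with $m\ge 4$ vertices, every interior point lies in at most two ears. I plan to prove this by showing that two ears at vertices $v_i, v_j$ of cyclic distance at least $2$ have disjoint interiors. If the distance is $3$ or more the ears share no vertex at all and are geometrically separated. If the distance is exactly $2$, say for the ears at $v_i$ and $v_{i+2}$, the two ear triangles share only the vertex $v_{i+1}$; in the interior angle at $v_{i+1}$ the first ear occupies the angular sector bounded by the rays toward $v_{i-1}$ and $v_i$, while the second occupies the sector bounded by the rays toward $v_{i+2}$ and $v_{i+3}$, and convexity places these at opposite ends of the interior angle, so they are disjoint. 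Consequently, a point lying in three ears would force the three indices to be pairwise cyclically adjacent, giving a triangle in the cycle graph $C_m$, which is impossible for $m\ge 4$.

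Given the lemma, at most $2l$ ears can contain a red point, so at most $2l$ vertices of $B'$ are non-removable. Whenever $|B'|\ge 2l+2\ge 4$ this yields at least two removable vertices; delete one and continue. Iterating eventually produces $B'$ with $|B'|\le 2l+1$ while preserving $R\subseteq Conv(B')$. The main obstacle is the ear-disjointness step at cyclic distance exactly $2$, which requires a careful angular argument at the shared vertex; it is here that convexity of $Conv(B')$ and the general-position assumption on $B\cup R$ (which keeps red points off all chords and strictly inside ears) combine to make the counting of blocked ears sharp.
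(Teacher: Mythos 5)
Your proof is correct and takes essentially the same route as the paper: the paper chooses a minimal $B'$ and applies the pigeonhole principle to find an empty ear whenever $|B'|\ge 2l+2$, which is precisely your observation that each red point lies in the interior of at most two ears. (One cosmetic remark: when $|B'|=4$, two ears at cyclic distance $2$ share \emph{two} vertices rather than one, being the two triangles cut off by a diagonal, but their interiors are still disjoint, so your count is unaffected.)
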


\begin{proof}
Let $B'$ denote a minimal subset of vertices of $B$ so that the conclusion of the claim is true.
If $|B'|\geq 2l+2$, by the pigeonhole principle we can find three consecutive vertices $p,q,r$ on $conv(B')$,
so that the triangle $pqr$ does not contain a red point. Thus, $B'\setminus \{q\}$ contradicts the choice of $B'$.
\end{proof}

\section{Minimal polygonization} \label{sect:minimal}

Now we are ready to give the first result.
As it was mentioned in Introduction,
the number $2n$ in the next theorem cannot be improved.

\begin{theorem}
\label{thm:igor}
If $|B|=n+k$, $|R|=l$, $k\geq K'(l,n)=O(n^3l^2)$ and the convex hull of $B$ contains $k$ blue vertices in its interior, 
then there exists a simple blue polygonization of a subset of $B$ of size at most $2n$, which contains all the vertices
of the convex hull of $B$, and  excludes 
all the red points.
%and such that all the $n$ vertices
%of the convex hull of $B\cup R$ are also its vertices. 
\end{theorem}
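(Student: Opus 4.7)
The plan is to construct the polygon in the zigzag form $b_1 p_1 b_2 p_2 \cdots b_n p_n$, using each hull vertex $b_i$ together with a carefully chosen inner blue point $p_i$. Such a polygon has exactly $2n$ vertices, contains every hull vertex of $B$, and has the key property that the region of $Conv(B)$ lying outside it is precisely the disjoint union of the triangles $\triangle b_i p_i b_{i+1}$. Excluding every red point therefore reduces to choosing the $p_i$'s so that each red point sits in one of these cutoff triangles.

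To decide which cutoff triangle each red point is to lie in, I would apply Lemma~\ref{lemma:partition} to $B\cup R$ with weights $m_i\approx (k+l)/n$, producing convex cells $Q_1,\dots,Q_n$ with $b_i b_{i+1}$ an edge of $Q_i$ and about $(k+l)/n$ interior points each. If for each $i$ one can pick an inner blue $p_i\in Q_i$ with $\triangle b_i p_i b_{i+1}\supseteq R\cap Q_i$, then the resulting $2n$-gon is automatically simple, because each zigzag piece lies inside the convex, pairwise disjoint cell $Q_i$, and by construction every red point then lies outside the polygon.

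The core technical step is a sub-lemma: in a convex region $Q$ with a distinguished boundary edge $ab$ containing at most $l$ red points, having $\Omega(n^2 l^2)$ blue points inside $Q$ forces the existence of a blue $p\in Q$ with $\triangle apb\supseteq R\cap Q$. The set of valid $p$'s is $V=Q\cap\bigcap_{r\in R\cap Q}W_r$, where $W_r$ is the wedge with apex $r$ bounded by the two rays extending $\overline{ar}$ and $\overline{br}$ past $r$. My approach would be to refine $Q$ by the arrangement of the $O(nl)$ lines joining a hull vertex of $B$ to a red point in $Q$, yielding $O(n^2 l^2)$ faces, on each of which the combinatorial type of every candidate triangle $\triangle b_j p b_{j+1}$ (with respect to the red points) is constant. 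Since $V$ is a nonempty union of such faces, a pigeonhole argument places a blue point in a face of $V$ whenever the number of blue points in $Q$ exceeds $O(n^2 l^2)$. Summing the required blue count across the $n$ cells of the partition yields $k=\Omega(n^3 l^2)$, matching the stated $K'(l,n)=O(n^3 l^2)$.

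The main obstacle is the sub-lemma: $V$ can be narrow and awkwardly positioned inside $Q$, so a naive pigeonhole directly over blue points does not suffice. Refining by \emph{all} hull vertices of $B$ (not just the endpoints $a,b$ of the current edge) is what gives enough rigid combinatorial structure on the cells to force a blue point into $V$ in the worst case, and is responsible for the extra $n^2$ factor in the bound.
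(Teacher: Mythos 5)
Your reduction to the sub-lemma is where the argument breaks: the sub-lemma is false, and no pigeonhole over blue points can rescue it, because whether the target set $V=Q\cap\bigcap_{r\in R\cap Q}W_r$ is nonempty depends only on $a$, $b$ and the red points, not on how many blue points $Q$ contains. Concretely, take $a=(0,0)$, $b=(1,0)$ and two red points $r=(-1,1)$, $r'=(2,1)$ inside a convex cell $Q$ that widens as it recedes from the edge $ab$ (Lemma~\ref{lemma:partition} gives you no control over the shape of the $Q_i$ beyond convexity, so such cells do arise). For $p$ above the $x$-axis, $r\in\triangle apb$ forces $p_x<-p_y$, while $r'\in\triangle apb$ forces $p_x>1+p_y$; these are incompatible, so $V=\emptyset$ and no apex works. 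Moreover, since $\triangle b_ip_ib_{i+1}\subseteq Q_i$ whenever $p_i\in Q_i$, a red point interior to $Q_i$ can only be excluded by the $i$-th cutoff triangle, so you cannot shift the burden to a neighbouring cell: the whole strategy of covering $R\cap Q_i$ by a single triangle over the hull edge $b_ib_{i+1}$ fails. The claim that ``$V$ is a nonempty union of such faces'' is exactly the unproved (and unprovable) step.

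The paper avoids this by never partitioning the red points. It draws the arrangement of the $nl$ lines through a hull vertex and a red point, finds by pigeonhole a single face containing $n$ inner blue points $p_1,\dots,p_n$ (this is where $O(n^3l^2)$ comes from), and applies Lemma~\ref{lemma:alternating} to the hull vertices together with these $n$ points to get a simple alternating $2n$-gon $b_1p_1\cdots b_np_n$. If a red point $r$ were interior, it would see some $b_i$ along a segment inside the polygon, so the line $b_ir$ would have to separate $p_{i-1}$ from $p_i$ --- impossible, as all the $p_j$ lie in one face of the arrangement and $b_ir$ is one of its lines. The essential idea you are missing is that the apexes should be chosen so that \emph{all} of them lie on one side of \emph{every} hull-vertex--red-point line, which is a condition on the $p_j$ jointly rather than a covering condition cell by cell.
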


\begin{proof}

Let $b_1,\ldots, b_n$ be the vertices of the convex hull.
Consider all the lines determined by one blue point from the convex hull and one red point. It is easy to see that by drawing these $nl$
 lines the interior of $Conv(B)$ is divided into no more than $(nl)^2$  2-dimensional regions.
Since we have at least $K'(l,n)$ interior blue points, it follows that there is a region that contains at least $n$ blue points
(see Figure \ref{fig:igor}).

Let $p_1, \ldots, p_n$ be blue points that lie inside one region.
By Lemma \ref{lemma:alternating} it follows that there exists a  simple $2n$-polygon $P$ whose vertices
are taken alternatingly from the sets $\{b_1,\ldots, b_n\}$ and $\{p_1,\ldots, p_n\}$.
 It is easy to see by the proof of Lemma \ref{lemma:alternating} that this $2n$-gon satisfies 
the following property: For each point $x$ from the interior of the $2n$-gon there is a blue point $b_i$ such that the segment $b_ix$  is
   entirely contained in the $2n$-gon. 

Without loss of generality we can assume that 
$P=b_1p_1\ldots b_np_n$. We claim that $P$ does not contain any red point in its interior. Suppose the contrary, i.e. there exists 
a red point $r$ in the interior of $P$.
Then there exists a blue vertex $b_i$ such that the segment $b_ir$ lies in the interior of $P$. Hence, the line $l$ through $b_i$ and $r$
 intersects the line segment $p_{i-1}p_{i}$ (where $p_0=p_n$),
 which cannot be true because all the points $p_1,\ldots, p_n$
 lie in the same closed half-plane defined by $l$.
 This contradiction finishes the proof.    
\end{proof}

\section{Total polygonization}\label{total}

\label{sec:total}

The aim of this section is to prove the main result, which is stated in Theorem \ref{thm:main2},
 about sufficient conditions for the existence of a blue polygonization that excludes all the red points.

%First, we will introduce a lemma whose proof is a bit technical, but based on a simple idea. We use Erd\H{o}s-Szekeres theorem and the pigeonhole principle 
%to find a subset of blue points with a nice structure. This structure makes it possible to form a partition into convex parts. Then we find a polygonal path in each convex
%part and finally we merge all these paths.  

By a {\it wedge} with $z$ as its apex point we mean a convex hull of
 two non-collinear rays emanating from $z$.
We define an ($l$-){\em zoo} $\mathcal{Z}=(B,R,x,y,z)$ as a set $B=B(\mathcal{Z})$ of blue and $R=R(\mathcal{Z})$, $|R|=l$, red points with two
 special blue points $x=x(\mathcal{Z})\in B, y=y(\mathcal{Z})\in B$ and a special point $z=z(\mathcal{Z})$ (not necessarily in $B$ or $R$) such that:

\begin{packed_enum}
\item every red point is inside $Conv(B)$
\item $x,y$ are on the boundary of $Conv(B)$
\item every red point is contained in the wedge 
$W=W(\mathcal{Z})$ with apex $z$ and boundary rays $zx$ and $zy$
\end{packed_enum}

We denote by $B^*=B^*(\mathcal{Z})$ the blue points inside $W'=W'(\mathcal{Z})$,
the wedge opposite to $W(\mathcal{Z})$. We refer to the points in $B^*$ as to special blue points.
We imagine $x$ and $y$ being on the $x$-axis (with $x$ having smaller $x$-coordinate than $y$)
and $z$ being above it (see Figure \ref{fig:pie}),
and we are assuming that when we talk about objects being below each other in a zoo.

A {\em nice partition} of an $l$-zoo is a partition of $Conv(B)$
into closed convex parts $P_0,P_1,\ldots, P_m$,
 for which there exist pairwise distinct special blue 
points $b_1,\ldots, b_{m}\in B^*$ (we call $b_0=x$ and $b_{m+1}=y$)
 such that for every $P_i$ we have that (see Figure \ref{fig:nicepart}):
\begin{packed_enum}
\item no red point is inside $P_i$ i.e. red points are on the boundaries of the parts
\item $P_i$ has $b_i$ and $b_{i+1}$ on its boundary
\end{packed_enum}

\begin{figure}[h]
    \centering
    \subfigure[]{\label{fig:pie}
        \includegraphics[scale=0.3]{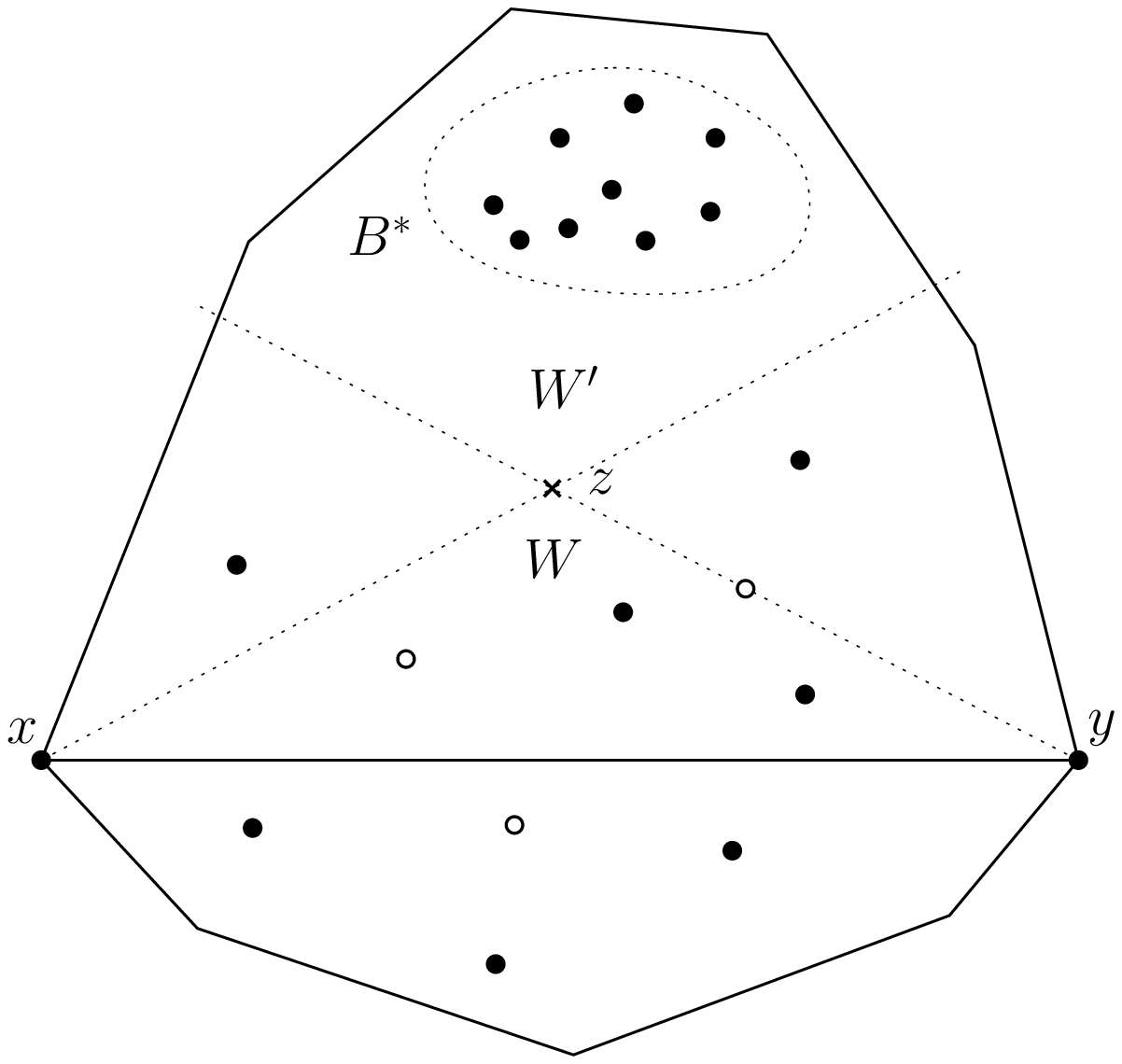}
        \hspace{5mm}}
    \subfigure[]{\label{fig:nicepart}
				\includegraphics[scale=0.3]{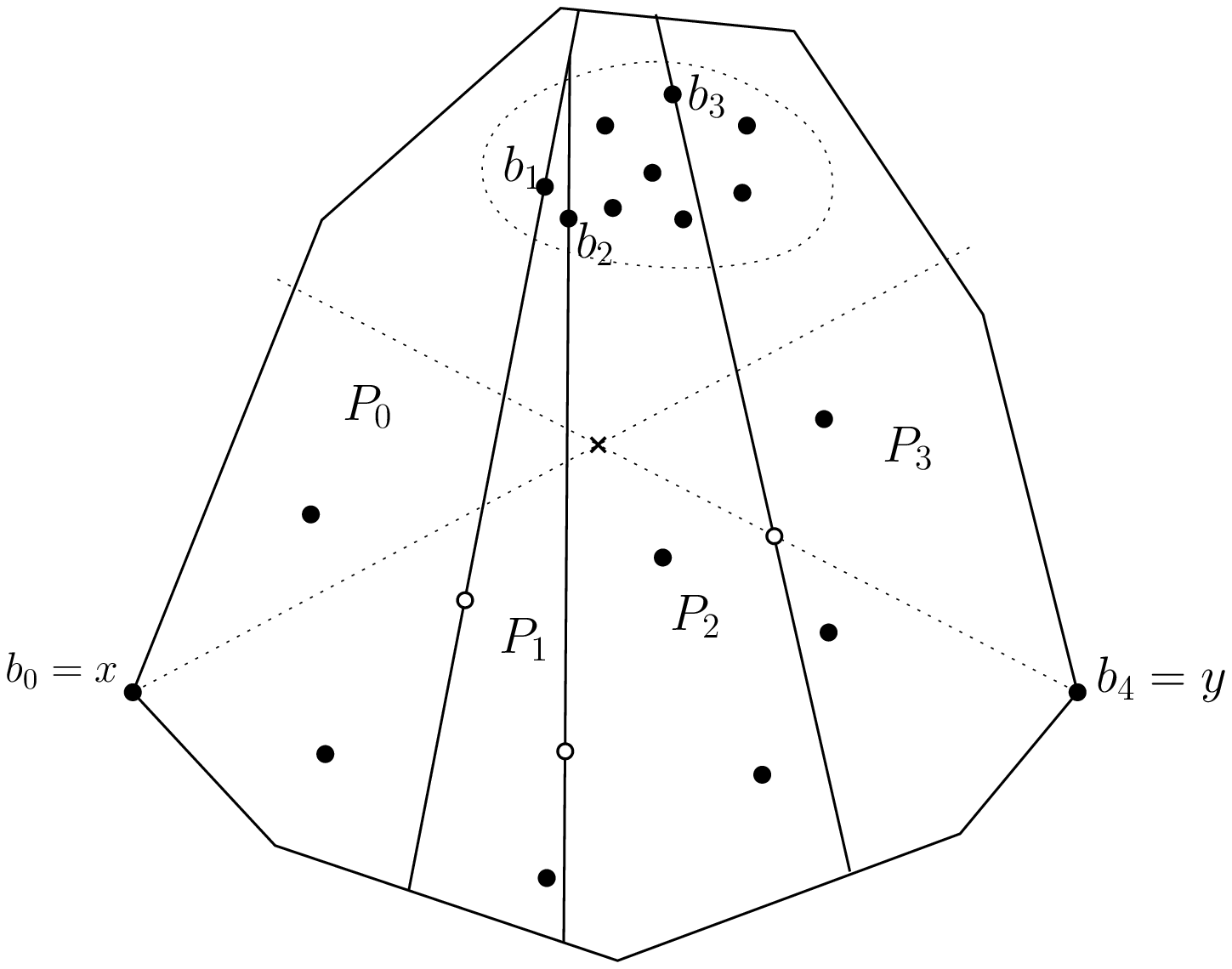}
}
   \caption{(a) $3$-zoo, (b) Nice partition of $3$-zoo into $4$ parts}
   \label{}
\end{figure}

\begin{proposition}
\label{prop:zooprop}
Given a zoo $\mathcal{Z}$ with a nice partition, we can draw a polygonal path
 using all points of $B=B(\mathcal{Z})$ with endpoints $x(\mathcal{Z})$ and $y(\mathcal{Z})$
 s.t. all the red points are below the polygonal path.
\end{proposition}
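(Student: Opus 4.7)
The plan is to build the polygonal path piece by piece, constructing one sub-path inside each convex part $P_i$ of the nice partition and then gluing the pieces together at the special blue points $b_1,\ldots,b_m$.

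First I would apply Proposition~\ref{lemma:path} separately inside each $P_i$. Because $P_i$ is convex and $b_i,b_{i+1}$ lie on its boundary, both are vertices of the convex hull of the set of blue points contained in $P_i$, so Proposition~\ref{lemma:path} supplies a simple polygonal path $\pi_i$ from $b_i$ to $b_{i+1}$ visiting every blue point of $B$ that happens to lie in $P_i$. Since each segment of $\pi_i$ joins two points of the convex set $P_i$, the whole path $\pi_i$ stays inside $P_i$. Because no red point lies in the interior of $P_i$, and because general position rules out a red point sitting in the relative interior of a segment between two blue points, the sub-path $\pi_i$ avoids every red point.

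I would then concatenate $\pi_0,\pi_1,\ldots,\pi_m$ at the common endpoints $b_1,\ldots,b_m$ to obtain a single polygonal path $\pi$ from $x=b_0$ to $y=b_{m+1}$ using every blue point of $B$. Simplicity of $\pi$ follows from the interior-disjointness of the parts together with the containment $\pi_i\subseteq P_i$: two distinct sub-paths can meet only along the shared boundaries $\partial P_i\cap\partial P_j$, and by general position they actually meet only at the explicitly shared endpoint $b_{\max(i,j)}$, which happens exactly when $|i-j|=1$.

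The part I expect to require the most care is the ``below'' condition. The key observation is that in a nice partition, red points sit only on the shared boundaries between consecutive parts $P_i$ and $P_{i+1}$, and these shared boundaries are segments running ``downwards'' from the special points $b_{i+1}\in B^*\subseteq W'$ towards the wedge $W$ that contains all red points. Since each sub-path $\pi_i$ is pinned between two $b$'s lying in $W'$ and stays inside $P_i\subseteq\mathrm{Conv}(B)$, one can verify slice by slice that every red point ends up on the $W$-side of $\pi$, i.e.\ below the polygonal path, which is precisely what the proposition asserts.
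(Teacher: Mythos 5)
Your proposal is correct and follows essentially the same route as the paper: apply Proposition~\ref{lemma:path} inside each convex part $P_i$ with $b_i$ and $b_{i+1}$ as the endpoints, then concatenate the resulting sub-paths. The paper's own proof is a one-liner doing exactly this; your additional verifications (that $b_i,b_{i+1}$ lie on the convex hull of $P_i\cap B$, that each sub-path stays inside its part, and that the red points end up on the correct side) are just the details the paper leaves implicit.
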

\begin{proof}
Indeed, by applying Proposition \ref{lemma:path} in each $P_i$, $0\leq i\leq m$,
 with $P_i\cap B$ as $S$, $b_{i}$ as $p$ and $b_{i+1}$ as
 $q$ we get polygonal paths, which can be concatenated in order to form the desired polygonal path.

\end{proof}

\begin{lemma}
\label{lemma:capCup}
Given an $l$-zoo $\mathcal{Z}$, if $B^*=B^*(\mathcal{Z})$ contains a 
blue $y$-monotone convex chain of size $2l-1$, then it has a nice partition.
\end{lemma}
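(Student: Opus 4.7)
The plan is to construct $l$ pairwise non-crossing chords $\gamma_1,\ldots,\gamma_l$ of $Conv(B)$, each passing through one red point and one chain point; the $l+1$ convex pieces they cut out of $Conv(B)$ will be the parts $P_0,P_1,\ldots,P_l$ of the nice partition, with $b_0=x$ on $\partial P_0$ and $b_{l+1}=y$ on $\partial P_l$. First I would sort the red points in angular order as seen from $z$, sweeping inside $W$ from the ray $zx$ toward $zy$, to obtain $r'_1,\ldots,r'_l$. Next, I would exploit the chain: since $c_1,\ldots,c_{2l-1}$ lies in the opposite wedge $W'$ and is $y$-monotone and convex, I claim the sequence of polar angles of its vertices about $z$ is unimodal (as a function of $y$-coordinate). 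A unimodal sequence of length $2l-1$ always splits into two monotone pieces meeting at the peak, at least one of which has length $\geq l$, and I would take $l$ vertices from that piece as my chain points $b_1,\ldots,b_l$, indexed so that their angular order about $z$ matches the sweep order of the $r'_j$. For each $j$, let $\gamma_j$ be the chord of $Conv(B)$ obtained by extending the line through $b_j$ and $r'_j$ until it hits $\partial Conv(B)$ on both sides.

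Granting non-crossing, the $\gamma_j$ cut $Conv(B)$ into convex pieces $P_0,\ldots,P_l$ in the natural linear order along the sweep direction; the shared boundary $P_{j-1}\cap P_j$ is $\gamma_j$ and therefore contains $b_j$, $x$ lies on $\partial P_0$ and $y$ on $\partial P_l$, and each red $r'_j$ lies on $\gamma_j$ and hence on a shared boundary, so no red is in the interior of any $P_i$. This is precisely a nice partition.

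The main obstacle is the non-crossing claim. Because $b_j$ and $r'_j$ lie in opposite wedges, each chord $\gamma_j$ threads near $z$, and whether two such chords cross inside $Conv(B)$ depends on whether the four boundary endpoints of their full extensions interleave on $\partial Conv(B)$; a naive angular match around $z$ is not by itself enough, because the extensions past $b_j$ and past $r'_j$ can re-cross inside the hull. I would use the full strength of the convex $y$-monotone structure of the chosen subchain (not merely the unimodality of angles from $z$) to show that, with the angular matching of $b_j$'s to $r'_j$'s, the boundary endpoints of $\gamma_j$ and $\gamma_{j+1}$ are cyclically nested on $\partial Conv(B)$ rather than interleaved. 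Should this direct construction hit pathological configurations, I would fall back to induction on $l$: the base case $l=1$ is the single chord through the unique chain point and unique red; for $l>1$, I would peel off the angularly extreme red $r^{*}$ by choosing a chain point $b^{*}$ so that the chord through $b^{*}r^{*}$ leaves all other reds on one side, then recurse on the smaller sub-zoo on that side. Each peeling step loses at most two chain points (the used $b^{*}$ and at most one stranded on the wrong side of the cut), which is exactly why a chain of size $2l-1$ is needed to sustain the recursion down to $l=1$.
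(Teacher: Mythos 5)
There is a genuine gap at the core of your primary construction: the pairwise non-crossing of the chords $\gamma_1,\ldots,\gamma_l$ is exactly the hard part, and you do not prove it --- you only say you ``would use the full strength of the convex $y$-monotone structure.'' Matching the angular orders about $z$ does not suffice, as you yourself note, because the chords do not pass through $z$ and their extensions beyond $b_j$ and beyond $r'_j$ can cross inside $Conv(B)$; no argument is supplied for the claimed ``cyclic nesting'' of the boundary endpoints. Even granting non-crossing, you need more: a family of pairwise non-crossing chords of a convex region generally decomposes it into pieces whose dual graph is a tree, not a path, so you must additionally show the chords form a linearly ordered (``parallel'') family with $x$ and $y$ in the two extreme pieces; this is asserted as ``the natural linear order along the sweep direction'' but never established. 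The unimodality claim is itself suspect: for a convex arc seen from an external point $z$, the polar angle is monotone only between the two tangent points from $z$, and if the chain contains both tangent points the angle function along the chain has an interior maximum and an interior minimum, hence is not unimodal.

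Your fallback induction is in fact the route the paper takes, and your bookkeeping (each peeling step costs at most two chain points, whence $2l-1$) matches the paper's invariant $|C_i|\ge 2|R_i|-1$. But the substance is missing: the paper chooses the cutting line as the common tangent of $Conv(C_i)$ and $Conv(R_i)$ that has $y$ and the interiors of both hulls on the same side (so the red on the tangent is discharged to a boundary and the remaining reds stay strictly on the far side), maintains the invariant that this tangent meets the boundary of the remaining region above the current anchor $b_{i+1}$, and, when that invariant would fail, inserts a second cut along an edge $b_{i+2}b_{i+3}$ of $Conv(C_i)$ --- this second cut is precisely where the second chain point per red is spent, and an argument is needed (and given in the paper) that afterwards the new tangent again meets the boundary high enough. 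Without specifying how your $b^{*}$ is chosen, why the leftover region is again a configuration to which the induction applies, and why $x$ and $y$ remain on the boundaries of the first and last parts, the fallback is a plan rather than a proof.
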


\begin{proof}
 Let $C=\{c_1, c_2, \ldots, c_{2l-1}\}$ denote a $y$-monotone blue convex chain of size $2l-1$,
 so that $y(c_1)<y(c_2)<\ldots <y(c_{2l-1})$. 
If $l>1$, without loss of generality, by the $y$-monotonicity  we can assume that the interior
 of $Conv(\{c_i, c_{i+1},\ldots, c_j\} )$ 
is on the same side of the line $c_{i}c_{j}$, for all $1\leq i<j\leq 2l-1$, as an unbounded portion of a positive part
of the $x$-axis. 

The special points of the nice partition will be always points of this chain.
We start by taking $Q_{-1}=Conv(B)$.
Then, we recursively define the partition $P_0,P_1,\ldots,P_i,Q_i$
 and points $b_1,b_2,\ldots, b_{i+1}\in B^* $ such that for each $P_i$ the two properties
 needed for a nice partition hold and the remainder $Q_i$ 
of the zoo is a convex part with $b_{i+1}$ and $y$ on its boundary.
We define $R_i=R\cap int(Q_i)$, $C_i=C\cap int(Q_i)$ 
and either $R_i$ is empty or $|C_i|\ge 2|R_i|-1$ and then $t_i$ denotes the common tangent of $Conv(C_i)$
 and $Conv(R_i)$, which has the point $y$ 
and the interior of $Conv(C_i)$ and $Conv(R_i)$ on the same side
(see Figure \ref{fig:lemma7def} for an illustration). We maintain the following:\\

\begin{tabular}{rl}
($\star$) & If $R_i$ is nonempty, then $t_i$ intersects the boundary of $Q_i$ in a point \\
& with higher $y$-coordinate than $b_i$.\\ 

\end{tabular}
\\

In the beginning when $i=-1$, $|C_i|\ge 2|R_i|-1$ and ($\star$) holds trivially.

In a general step, $P_0,P_1,\ldots,P_i,Q_i$ being already defined we do the following.

If $Q_i$ does not contain red points inside it,
taking $P_{i+1}=Q_i$ and $m=i+1$ finishes the partitioning. The convex set 
$P_m=Q_i$ has $b_{m+1}=y$ and $b_{m}=b_{i+1}$ on its boundary. Hence, the two necessary properties hold for $P_m$. 

Otherwise, let $P_{i+1}$ be the intersection of $Q_i$ with the closed half-plane defined by $t_i$, which contains
$x$. Trivially, there is no red point inside it. As $t_i$ intersects the boundary of $Q_i$ in a point with higher $y$-coordinate then $b_{i+1}$, we have that $P_{i+1}$ has $b_{i+1}$ on its boundary.
 Let $b_{i+2}$ denote the blue point lying on $t_i$, trivially $b_{i+2}$
 is on the boundary of
 $P_{i+1}$ too. It is easy to see that the point $b_{i+1}$ has either the lowest or the highest $y$-coordinate among the points 
in $C_{i}$. We define $Q'_i$ as the closure of $Q_i\setminus P_{i+1}$, 
$R'_i=R\cap int (Q'_i)$, $C'_i=C\cap int (Q'_i)$,
 and $t'_i$ denotes 
the common tangent of $Conv(C'_i)$ and $Conv(R'_i)$,
 which has the point $y$ and the interior of $Conv(C'_i)$ and $Conv(R'_i)$
 on the same side.
If $t'_i$ cannot be defined then $R'_i$ is empty and the next step will be the final step, we just take $Q_{i+1}=Q'_i$.
\begin{figure}[h]
    \centering
    \subfigure[]{\label{fig:lemma7def}
        \includegraphics[scale=0.43]{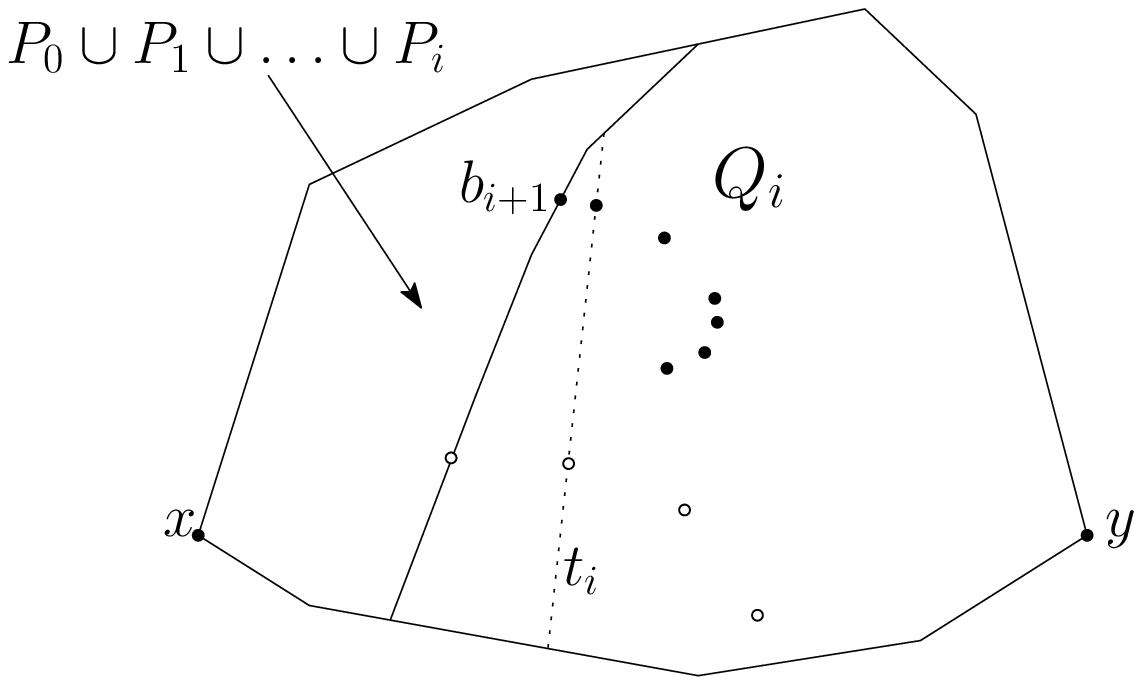}
        \hspace{0mm}}
    \subfigure[]{\label{fig:lemma7a}
        \includegraphics[scale=0.43]{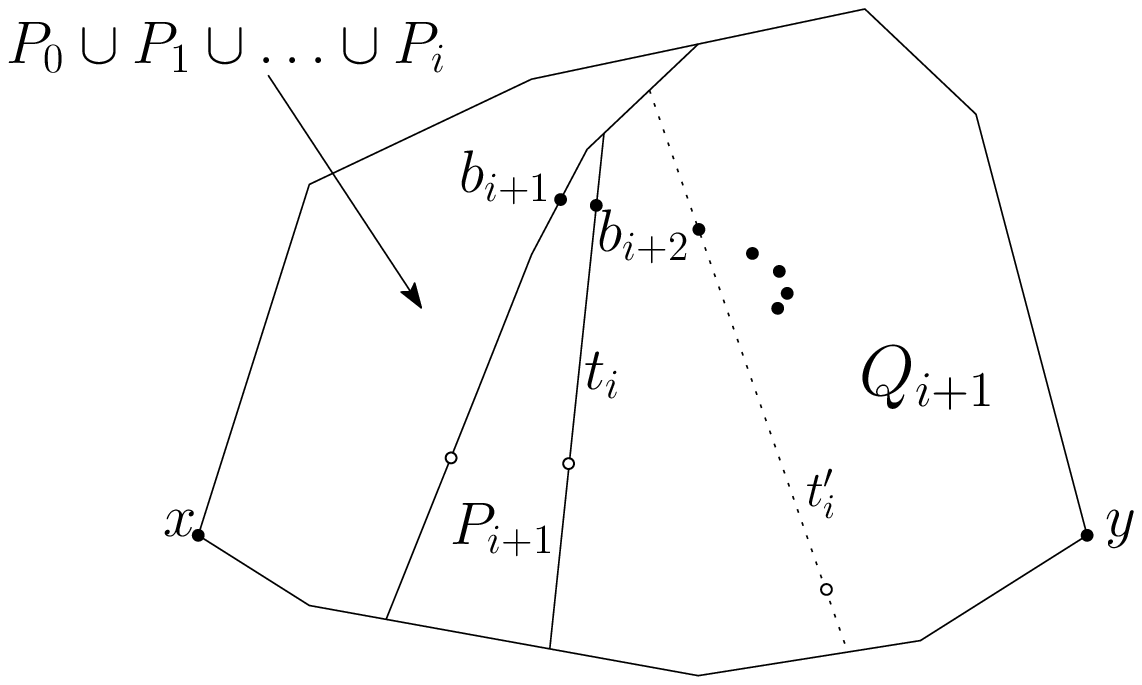}
        \hspace{0mm}}
    \subfigure[]{\label{fig:lemma7b}
				\includegraphics[scale=0.43]{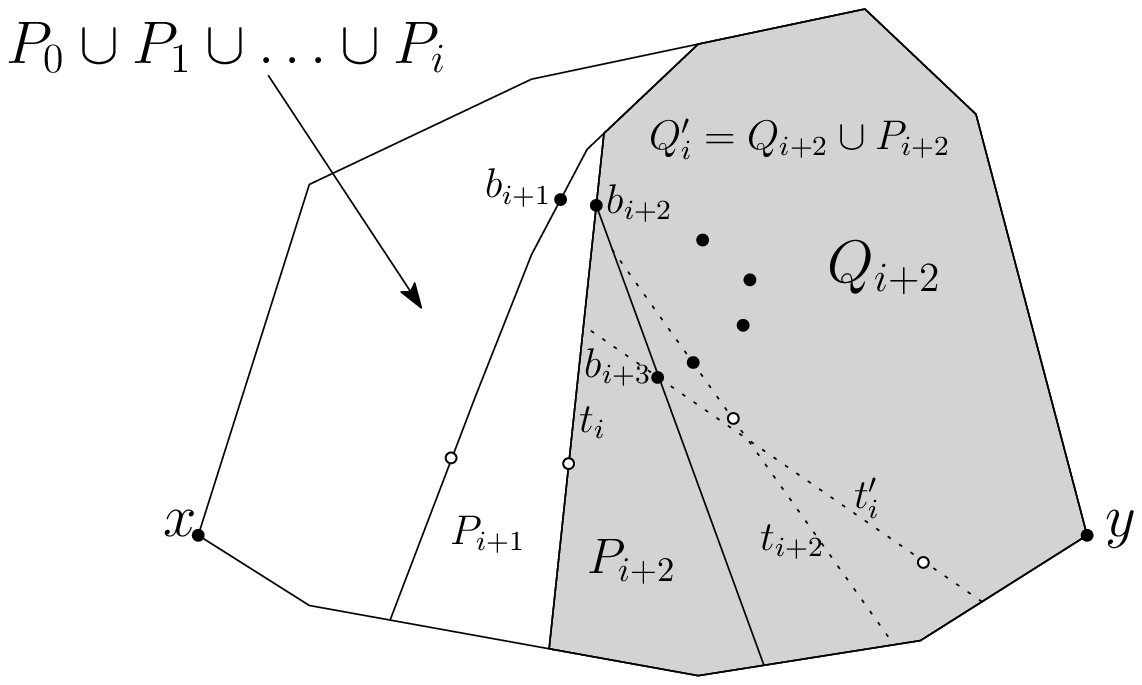}
}
   \caption{(a) a general step of the recursion continuing with (b) case (i) or (c) case (ii)}
   \label{}
\end{figure}

\begin{enumerate}[(i)]
\item \label{item:lemma7(i)}
If $t'_i$ intersects the boundary of $Q'_i$ in a point with 
higher $y$-coordinate than $b_{i+2}$ then ($\star$) will 
hold in the next step so we can finish this step by taking $Q_{i+1}=Q'_i$ (see Figure \ref{fig:lemma7a}).
\item \label{item:lemma7(ii)}
If $t'_i$ does not intersect the boundary of $Q'_i$ in a point with higher $y$-coordinate than $b_{i+2}$ then 
we do the following (see Figure \ref{fig:lemma7b}).
Denote by $b_{i+3}$ the blue point on $t'_i$. Now $P_{i+2}$ is defined as the intersection of $Q'_i$ and the half-plane 
defined by the line $b_{i+2}b_{i+3}$ and containing $x$. It is easy to see that $P_{i+2}$ 
does not contain red points in its interior, and it has 
both $b_{i+2}$ and $b_{i+3}$ on its boundary. We finish this step by taking  $Q_{i+2}$ as the closure of
 $Q'_i\setminus P_{i+2}$.
It remains to prove that in the next step property ($\star$) holds.

First, observe that $b_{i+3}$ has either the lowest or the highest $y$-coordinate
 among the points in $C_{i+2}$. Moreover, it is easy to see that it has to be the lowest
 one otherwise we would end up in Case (\ref{item:lemma7(i)}). Thus, the blue point on the new tangent $t_{i+2}$ is
 a point of the chain $C$ that is higher then $b_{i+3}$. Then the intersection of $t_{i+2}$ with the boundary of $Q_{i+2}$
 must be a point with higher $y$-coordinate than $b_{i+3}$ as needed.
\end{enumerate}

The condition $|C_i|\ge 2|R_i|-1$ holds by induction. Indeed, 
in each step the number of remaining red points
decreases by $1$, while the number of remaining blue points decreases at most by $2$
 except the last step when we never have Case (\ref{item:lemma7(ii)}), and thus,  the number of remaining blue points  decreases also just by $1$.

% By the $y$-monotonicity of the chain $C$, in  case (\ref{item:lemma7(ii)}), the region $P_{i+2}$
% does contain any point from $C$ in its interior.
% Thus, as we have $|C|\geq 2l-1$, in the above definition, it can never happen that $C_i$ is empty and $R_i$ is not.
% Indeed, we cannot remove more than two blue points from $C$ without the removal of a point from $R$.
% Hence, the only problematic case is $|C|= 2l-1$. However, it means that we can end up in case 
%(\ref{item:lemma7(ii)}) at most $l$ times.
\end{proof}

The next lemma is a variant of the previous one,
and it is the key ingredient in the proof of the main theorem in this section.

\begin{figure}[h]
\centering
\includegraphics[scale=0.55]{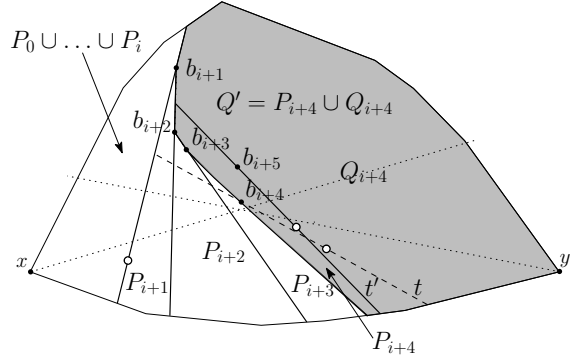}
\caption{A general step of the recursion in Lemma \ref{lemma:capCup2}, $s=4$} 
\label{fig:lemma8}
\end{figure}

\begin{lemma}
\label{lemma:capCup2}
Given an $l$-zoo $\mathcal{Z}$ if $B^*=B^*(\mathcal{Z})$ contains at least $\Omega  (l^2)$ blue points, then it has a nice partition.
\end{lemma}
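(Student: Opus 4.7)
The plan is to prove Lemma \ref{lemma:capCup2} by induction on $l$, peeling off one red point at a time. The base case $l = 0$ is trivial (take $m = 0$ and $P_0 = \mathrm{Conv}(B)$). In the inductive step, given $|B^*| \geq C l^2$ for a sufficiently large constant $C$, the goal is to find a first cut that (i) passes through some special blue point $b_1 \in B^*$ and some red point $r$, (ii) together with the boundary of $\mathrm{Conv}(B)$ defines a convex region $P_0$ containing $x$ and $b_1$ on its boundary and no red points in its interior, and (iii) leaves a smaller sub-zoo, with one fewer red point and at least $C(l-1)^2$ blue points remaining in $B^*$, on which the inductive hypothesis can be applied.

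To construct this cut I would mimic the common-tangent recursion from Lemma \ref{lemma:capCup}: sort the blue points of $B^*$ by angular order around $z$ (from the direction of $x$ toward $y$), and look for $b_1$ among roughly the first $\Theta(l)$ points in that order. For each candidate $b$ and each red point $r$, the chord through $b$ and $r$ extended to the boundary of $\mathrm{Conv}(B)$ is either a valid cut as above or fails one of the validity conditions (the chord passes through another red point, does not separate $x$ from $y$, or does not preserve an invariant analogous to $(\star)$ from Lemma \ref{lemma:capCup}). I expect that only $O(l)$ candidate blue points fail these conditions, so some good $b_1$ always exists; Figure \ref{fig:lemma8} suggests that when the straightforward tangent fails, a short auxiliary cap of $s$ consecutive points (here $s=4$) is used to correct the cut, analogously to case~(ii) of Lemma \ref{lemma:capCup}, and these $s$ points are then consumed into $P_0$ without being selected as later $b_i$.

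The main obstacle will be making the per-step bookkeeping rigorous: showing that the total number of blue points of $B^*$ consumed in one step — either used as special points $b_i$ in the new boundary or swallowed by the peeled-off region $P_0$ — is at most $O(l)$, so that the $C(l-1)^2$ budget for the recursive call is maintained. This requires bounding the number of blue points of $B^*$ lying in the angular slab between $x$ and the chosen chord, using the wedge structure around $z$ and the convex position of the remaining red points; summing the $O(l)$ losses over $l$ inductive steps gives the target $O(l^2)$ total. Once this per-step bound is established and the analogue of $(\star)$ is verified for the non-convex setting of general $B^*$, the induction runs through and yields the desired nice partition.
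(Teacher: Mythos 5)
Your overall architecture --- peel off one red point per step, pay $O(l)$ blue points of $B^*$ per step, sum to $O(l^2)$ --- matches the paper's recursion, but the proposal is missing the one idea that actually makes the per-step bound work, and you correctly flag exactly that spot as ``the main obstacle'' without closing it. There is no reason, from the wedge structure alone, why only $O(l)$ candidate blue points should fail your validity conditions, or why the auxiliary cap of $s$ consecutive points swallowed in a bad step should have $s=O(l)$: an angular slab between $x$ and a chosen chord can a priori contain almost all of $B^*$. The paper closes this gap with a dichotomy that your proposal never invokes. First it proves Lemma \ref{lemma:capCup}: if $B^*$ contains a $y$-monotone convex chain of $2l-1$ points, a nice partition already exists using only that chain (with an invariant $(\star)$ and a two-case repair step). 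Then, in the proof of Lemma \ref{lemma:capCup2}, it may assume no such chain exists; the auxiliary cap used in a bad step consists of consecutive vertices of $Conv(C_i)$ visible from $x$, which is itself a $y$-monotone convex chain, hence automatically has $s<2l-1$ vertices. That is the entire source of the $O(l)$ per-step bound, and it is not derivable from the considerations you list (in particular, the red points are not in convex position, so ``the convex position of the remaining red points'' is not available).

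A secondary imprecision: in the paper's construction the next special point is not found by searching over $\Theta(l)$ angular candidates; it is forced, namely the blue point on the common tangent of $Conv(C_i)$ and $Conv(R_i)$ that has $y$ and both hulls on the same side. The case analysis is only about whether that tangent meets the boundary of $Q_i$ above $b_{i+1}$; when it does not, the convex cap is inserted and the tangent is retaken from the residual region $Q'$. If you add the chain dichotomy (via Lemma \ref{lemma:capCup}) and replace the angular search by the tangent construction, your induction goes through with the budget $(2l-2)l+1$.
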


\begin{proof}
We can suppose that in $B^*$ there is no $y$-monotone convex chain of size $2l-1$, because otherwise we can apply Lemma \ref{lemma:capCup} in order
to get a desired nice partition.

We start by taking $Q_{-1}=Conv(B)$ and $C=C_{-1}=B^*$.
As in Lemma \ref{lemma:capCup} we recursively define the partition $P_0,P_1,\ldots,P_i,Q_i$ and points $b_1,b_2,\ldots,b_{i+1}$ such that for each $P_i$ the two properties needed for a nice partition hold and the remainder
 $Q_i$ of the zoo $\mathcal{Z}$ is a convex part with $b_{i+1}$ and $y$ on its boundary. We define $R_i=R\cap int(Q_i)$, $C_i=C\cap int(Q_i)$.

In a general step, $P_0,P_1,\ldots,P_i,Q_i$ being already defined we do the following.

If $Q_i$ does not contain red points inside it,
taking $P_{i+1}=Q_i$ and $m=i+1$ finishes the partitioning. The convex set 
$P_m=Q_i$ has $b_{m+1}=y$ and $b_{m}=b_{i+1}$ on its boundary. Hence, the two necessary properties of a nice partition hold for $P_m$.

Otherwise, we again define $t$, the common tangent of $Conv(C_i)$ and $Conv(R_i)$ which has the point $y$ and the interior of $Conv(C_i)$ and $Conv(R_i)$ on the same side of $t$. If $t$ intersects the boundary of $Q_i$ in a point with higher $y$-coordinate than $b_{i+1}$ then we can finish this step as in Lemma \ref{lemma:capCup} by taking $b_{i+2}$ as the blue point on $t$, $P_{i+1}$ as the intersection of $Q_i$ with the closed half-plane defined by $t$ and  $Q_{i+1}$ as the closure of $Q_i\setminus P_{i+1}$.

If $t$ does not intersect the boundary of $Q_i$ in a point with higher $y$-coordinate 
than $b_{i+1}$, then we define $b_{i+1},b_{i+2},\ldots,b_{i+s}$, $b_{i+s}\in t$, to be the consecutive vertices of $Conv(C_i)$,
 for which the segments with one endpoint $x$ and the other being any of these points, do not cross $Conv(C_i)$.
 As this is a $y$-monotone convex chain with $s$ vertices, we have that $s<2l-1$. 

We obtain the regions $P_{i+1}, P_{i+2}, \ldots, P_{i+s-1}$ (see Figure \ref{fig:lemma8}),
 by cutting $Q_{i}$ successively with the lines through the pairs
 $b_{i+1}b_{i+2}, b_{i+2}b_{i+3}, \ldots, b_{i+s-1}b_{i+s}$ (in this order).
 Evidently, these regions satisfy the property needed for a nice partition.
Let $Q'$ stand for the remaining part of $Q_{i}$ (the gray region in Figure \ref{fig:lemma8}).
 Furthermore, $R'=R\cap int(Q')$ and $C'=C\cap int(Q')$. We define $t'$ to be the common tangent of $Conv(C')$ and $Conv(R')$ 
which has the point $y$ and the interior of $Conv(C')$ and $Conv(R')$ on the same side.
We define $b_{i+s+1}$ to be the blue point on $t'$ and $P_{i+s}$ 
to be the intersection of $Q'$ with the closed half-plane defined by $t'$ and containing $x$.
Again $P_{i+s}$ satisfies the property needed for a nice partition, as it has
 $b_{i+s+1}$ and $b_{i+s}$ on its boundary.
Indeed, otherwise $t'$ would not intersect the boundary of $Q'$ in a point with higher $y$-coordinate than $b_{i+s}$,
in which case $t'$ could not be the tangent to $Conv(C')$ and $Conv(R')$, a contradiction.

Observe that $C_{i+s}$ contains all points of $C_i$ except $b_{i+2},b_{i+3},\ldots, b_{i+s+1}$.
Because of that, if we proceed in this way recursively, in each step the number of remaining red points
decreases by $1$, while the number of remaining blue points decreases by $s<2l-1$.
Thus, if originally, we had $(2l-2)l+1$ blue points in $B^*$, we can proceed until the end thereby finding a nice partition of  
$\mathcal{Z}$.
\end{proof}

Having the previous lemma, we are in the position to prove  Theorem \ref{thm:main2}.

% \begin{theorem}
% \label{thm:main2}
% $ K(l)=O(l^4)$, i.e. there exists a $K_0(l)=O(l^4)$, so that 
% if $|B|=n+k$, $k\geq K_0(l)$, $|R|=l$, and vertices of the convex hull of $B\cup R$ are $n$ blue points,
%  then there exists a blue polygonization that excludes all the red points. 
% \end{theorem}	

\begin{proof}[Proof of Theorem \ref{thm:main2}.] %({\bf Theorem \ref{thm:main2}})
First, by Proposition \ref{prop:smallerHull} we obtain 
a subset $B'$, $|B'|=m$, %($Conv(B')$ is bounded by thick segments in the left part of Figure \ref{fig:MainThm}) 
of the vertices of $Conv(B)$ of size at most $2l+1$,
so that $R\subseteq Conv(B')$. 
%Let $b_0',\dots,b_{m-1}'$ be the vertices of $Conv(B')$ listed in a cyclic order.
Let $b_0', b_1', \ldots, b_{m-1}'$ denote the blue points in $B'$ listed according to their cyclic order on the boundary of $Conv(B')$.

% $0\leq i\leq m-1$ (indices are taken modulo $m$),
% and by all the pairs consisting of one blue vertex from $B'$ and one red point.
%It is easy to see that by drawing these $2m$ 
%lines $Conv(B)$ is divided into no more than $(ml+{m \choose 2})^2=O(\min \{l^4,n^2l^2\})$ regions.
%By the hypothesis of the lemma we assume that there are $\Omega (\min \{l^6,n^2l^4\})$ interior blue points in $Conv(B)$.
%It follows that there is a region $S$ that contains at least $\Omega (l^2)$ blue points. Let $Q$ stand for the blue points in $S$.

%Suppose that the convex hull of $B' \cup Q$ contains at least
%$l$ vertices, which belong to $Q$. These $l$ vertices have to appear in a cyclic order on the boundary
%between two consecutive vertices of $Conv(B')$. Indeed, if that is not the case,
%we can divide $Q$ into two non-empty parts by a line through a point in $B'$ and $R$ (contradiction).
%Thus, we can apply Lemma \ref{lemma:specialCase} in order 
%to obtain a polygonization of $B$ that excludes $R$, i.e. all the red points.

\begin{figure}[h]
\centering
\includegraphics[scale=0.45]{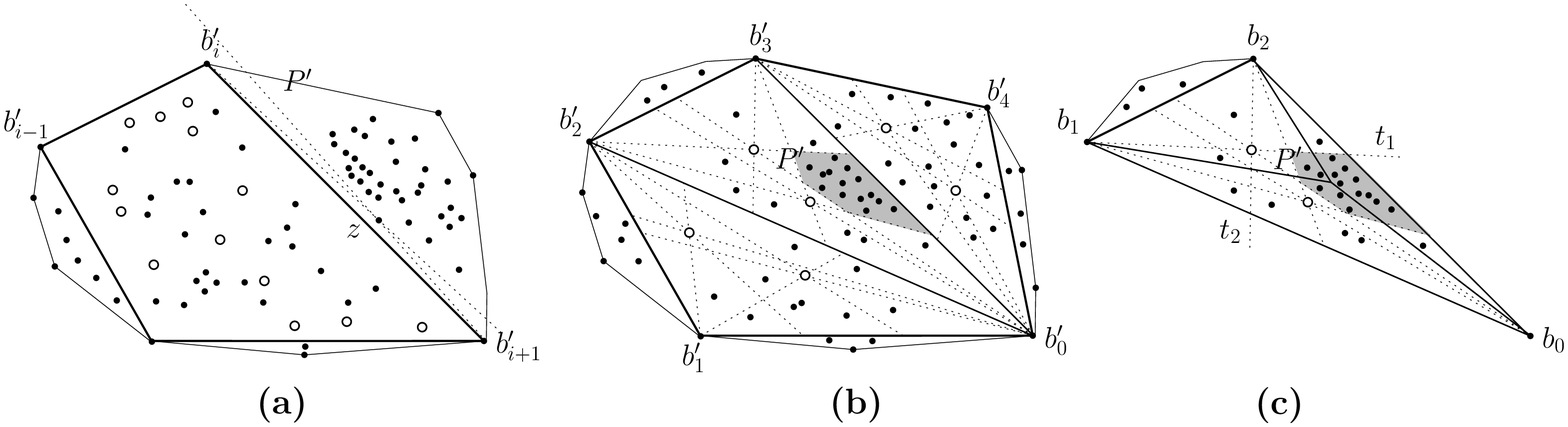}
\caption{Partition of $Conv(B)$} 
\label{fig:MainThm}
\end{figure}

%Let $c_0, c_1 \ldots c_t, c_{t+1} \ldots c_{u-1}$, $u= O(l)$, denote the vertices of $Conv(B' \cup Q)$ listed in a cyclic order,
%so that $c_1, \ldots c_t\in Q$. For the convenience we allow $t$ to be zero, which means that $Q\subseteq Conv(B')$.
%Clearly, either $S\subseteq Conv(B')$, or $S\not\subseteq Conv(B')$.

First, we suppose that $Conv(B')$ does not contain $\Omega(l^4)$ points in its interior (see Figure \ref{fig:MainThm} (a)).
It follows, that there is a convex region $P'$ containing $\Omega(l^3)$ blue points,
which is an intersection of $Conv(B)$  with a closed half-plane $T$ defined by a line through two
consecutive vertices $b_i'$ and $b_{i+1}'$, for some $0 \leq i < m$ (indices are taken modulo $m$),
 on the boundary of $Conv(B')$,
such that $T$ does not contain the interior of $Conv(B')$. 
%In fact, at this point requiring  $\Omega(l^2)$ blue points in $P'$ would suffice, but in the other case a weaker assumption in the hypothesis of the theorem on the number of blue points would not be enough.
Let $B''$ denote the set of vertices of $Conv(P')$ except $b_i'$ and $b'_{i+1}$.
Observe that we have an $l$-zoo $\mathcal{Z}$ having $B(\mathcal{Z})=B\setminus B''$,
$R(\mathcal{Z})=R$, $b_i'$ and $b_{i+1}'$ as $x(\mathcal{Z})$ and $y(\mathcal{Z})$, respectively.
By the general position of $B$ we can take $z(\mathcal{Z})$ to be a point very close to the line segment $b_i'b_{i+1}'$,
so that $B^*(\mathcal{Z})$ contains $\Omega(l^2)$ blue points.
Thus, by Lemma \ref{lemma:capCup2} we obtain a nice partition of $Z$. Hence, by Proposition \ref{prop:zooprop} 
we obtain a blue polygonal path $Q$ having $B\setminus B''$ as a set of vertices.
The desired polygonal path is obtained by concatenating the path $Q$ with the convex chain
formed by the points in $B''\cup\{b_i',b_{i+1}'\}$.

Thus, we can suppose that $Conv(B')$ contains $\Omega(l^4)$ points in its interior (see Figure \ref{fig:MainThm} (b)).
Let $R_i$ denote the intersection of $R$ with the triangle $b_0'b_i'b_{i+1}'$, for all $1\leq i <m-1$.
For each triangle $b_0'b_i'b_{i+1}'$ we consider the  lines through all the pairs $r$ and $b$,
such that $b=b_0',b_i'$ or $b_{i+1}'$ and $r\in R_i$.
For each $i$, $1\leq i <m-1$, these lines partition the triangle $b_0'b_i'b_{i+1}'$ into  $O(|R_i|^2)$ 2-dimensional regions.
Hence, by doing such a partition in all the triangles  $b_0'b_i'b_{i+1}'$
we partition $Conv(B')$ into $O(\sum_{i=1}^{m-2}|R_i|^2)=O(|R|^2)$ regions, each of them fully contained
in one of the triangles $b_0'b_i'b_{i+1}'$. It follows that one of these regions, let us denote it by $P'$, contains at least $\Omega(l^2)$ blue
points. Clearly, $P'$ is contained in a triangle $b_0'b_i'b_{i+1}'$, for some  $1\leq i <m-1$.

For the convenience we rename the points $b_0',b_i',b_{i+1}'$ by  $b_0,b_1,b_2$ in clockwise order.
We apply Partition Lemma (Lemma \ref{lemma:partition}) on the triangle $b_0b_1b_2$, so that we obtain 
a partition of the triangle $b_0b_1b_2$ into three convex polygonal regions $P_0',P_1', P_{2}'$ (in fact triangles),
such that each part contains $\Omega(l^2)$ blue points belonging to $P'\cap P_j'$, for all $0\leq j \leq 2$, and has $b_jb_{j+1}$ as a boundary segment.
We denote by $P_0,P_1,P_{2}$  the parts in the partition of $Conv(B)$,
which is naturally obtained as the extension of the partition of $b_0b_1b_2$,
so that $P_j$, $P_j\supseteq  P_j'$, has $b_jb_{j+1}$ (indices are taken modulo $3$) either as a boundary edge or as a diagonal.

In what follows we show  that in each $P_j$, $0\leq j\leq 2$, %, which contains at most $l$ red points, and a set of $\Omega (l^2)$ blue points separated
 %by a wedge from all red points in $Q_i$, such 
we have an $l_j$-zoo $\mathcal{Z}_j$, $l_j\leq l$, with $b_j$ as $x(\mathcal{Z}_j)$ and $b_{j+1}$ and $y(\mathcal{Z}_j)$, respectively,
and with $\Omega(l^2)$ blue points in $B^*(\mathcal{Z}_j)$.

First, we suppose that there exists a red point in $P_j'$.
 We take
$z(\mathcal{Z}_j)$ to be the intersection of two tangents $t_1$ and $t_2$ from $b_j$ and $b_{j+1}$, respectively,
 to $Conv(R \cap P_j')$ that have $Conv(R \cap P_j')$ and $b_jb_{j+1}$ on the same side.
 Clearly, $P'$ has to be contained in one of four wedges defined by $t_1$ and $t_2$ (see Figure \ref{fig:MainThm} (c)).
  However, if $P'$ is not contained in the wedge defined by $t_1$ and $t_2$,
 which has the empty intersection with the line through $b_j$ and 
$b_{j+1}$, either $P_{j+1}$ or $P_{j-1}$ cannot have a non-empty intersection with $P'$ (contradiction).
Thus, $B^*(\mathcal{Z}_j)$ of $\mathcal{Z}_j$ contains at least $\Omega(l^2)$ blue points.

Hence, we can assume that $P_j'$ does not contain any red point.
In this case, by putting $z$ very close to $b_jb_{j+1}$, so that $z\in b_0b_1b_2$, we can make sure, that
the corresponding wedge above the line $b_jb_{j+1}$ contains all the blue points in $P'$. 

 Thus, in every $P_j$, $0\leq j\leq 2$, we have $\mathcal{Z}_j$ with $b_j$ and $b_{j+1}$ as $x(\mathcal{Z}_j)$ and $y(\mathcal{Z}_j)$,
 respectively,
the set of blue points in $P_j$ as $B(\mathcal{Z}_j)$, and the set of red points in $P_j$ as $R(\mathcal{Z}_j)$.
By using Proposition \ref{prop:zooprop} on a nice partition of $\mathcal{Z}_j$ obtained by Lemma \ref{lemma:capCup2}
we obtain a polygonal path using all the blue points in $P_j$
which joins $b_j$ and $b_{j+1}$, and which has all the red points in $P_j$ on the "good" side.
Finally, the required polygonization is obtained by concatenating  
the paths obtained by Lemma \ref{lemma:capCup2}.
\end{proof}

The polynomial upper bound on $K(l)$ in the previous theorem is complemented with the following lower bound construction, which still leaves
a huge gap. 
%In fact, all the evidence we have suggests that $K(l)$ is linear.

\begin{theorem} \label{thm:lowerbound}
For arbitrary $n\ge 3,l\ge 1$ and $k\le 2l-2$ there is a set of points $B\cup R$ (as before $|B|=n+k$,
 $|R|=l$ and the set of  vertices of the convex hull of $B\cup R$ consists of  $n$ blue points),
 for which there is no polygonization of the blue points, which excludes all the red points. 
\end{theorem}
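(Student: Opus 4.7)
The plan is to give an explicit construction, for every $n\ge 3$, $l\ge 1$, and $k\le 2l-2$, of a point set $B\cup R$ with the claimed properties that admits no blue polygonization excluding all reds.

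I would begin from a tight small-case configuration and generalize. In the base case $l=2$, $k=2$, a triangular hull $b_1b_2b_3$ together with two interior blues $b_4,b_5$ placed near the lower two sides and two reds on a horizontal segment in between already admits no excluding polygonization: a direct case analysis shows that the polygon edges that can cross the vertical line through the left red strictly above it are exactly the edges incident to $b_1$, and similarly every edge crossing above the right red is incident to $b_2$. To extend to general $l$, I would place the $l$ reds on a horizontal line and the $k\le 2l-2$ interior blues on a zigzag just above and below this line so that for each red $r_i$ there is a ``controller'' vertex $c_i$ with the property that every edge of any polygonization crossing the vertical line through $r_i$ strictly above $r_i$ is incident to $c_i$. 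Consecutive reds would be arranged to share at most one controlled blue, so that the forthcoming constraints are genuinely distinct.

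Given such a configuration, I would prove the non-existence of an excluding polygonization by a parity argument. For $r_i$ to lie in the exterior of a polygonization $P$, the number of $P$-edges crossing the vertical line through $r_i$ above $r_i$ must be even, so the controller $c_i$ must be adjacent in $P$ either to both of a specific pair of controlled blues or to neither of them. Collecting these ``$0$ or $2$'' adjacency conditions over all $l$ reds yields a system of required neighborships that the $2$-regular polygon structure on $n+k\le n+2l-2$ vertices cannot realize; the shortage of two interior blues compared with the $2l-1$ chain length appearing in Lemma~\ref{lemma:capCup} is what makes the resulting degree sum infeasible.

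The main obstacle is designing the zigzag so that the controller structure is rigorous for every polygonization, including non-convex ones in which interior blues serve as detours through the configuration; in particular one must rule out that a single polygon edge accidentally contributes to the parity count of two different reds in incompatible ways. Extending the construction to arbitrary $n$ is by contrast routine: the additional hull vertices can be placed far above the red line so that no edge incident to them can cross any of the vertical lines through the reds above them, and therefore they contribute nothing to the parity analysis.
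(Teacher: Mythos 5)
Your overall strategy---a parity count of polygon edges crossing a vertical ray above each red point, funneled through a designated ``controller'' vertex---is genuinely different from the paper's argument, but as written it has a real gap at exactly the point you yourself flag as ``the main obstacle,'' and that obstacle is not a technicality. The controller property you need (\emph{every} edge of \emph{every} polygonization that crosses the vertical line through $r_i$ strictly above $r_i$ is incident to the single vertex $c_i$) cannot be arranged by any placement of the points: as soon as there are blue points on both sides of that vertical line, an edge joining two of them can pass above $r_i$ without touching $c_i$. Your remark that hull vertices placed far above the red line ``contribute nothing to the parity analysis'' is the clearest symptom: an edge between two high hull vertices lying on opposite sides of the vertical line through $r_i$ \emph{does} cross the upward ray from $r_i$, so it does enter the parity count; the ray must go to infinity for the even/odd criterion to detect exteriority. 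Beyond this, the final step (``yields a system of required neighborships that the $2$-regular polygon structure \ldots{} cannot realize'') is asserted rather than proved; no counting or degree argument is actually carried out, so even granting the controller structure the contradiction is not established.

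For comparison, the paper avoids rays and parity entirely and works with a local ``forced neighbor'' condition, which is what makes the argument robust against arbitrary non-convex polygonizations. It places the $l$ reds on a convex chain $xr_1\cdots r_ly$ between two hull vertices $x,y$, puts one blue point $l_i$ just below each gap $r_ir_{i+1}$ ($l-1$ of them), and puts the remaining $l-1$ interior blue points ($b$-vertices) high up near the other hull vertices. The geometry then forces that in any excluding polygonization each $l_i$ has \emph{both} of its polygon neighbors among the $b$-vertices (any other choice of neighbor traps some $r_j$ inside the polygon). Since the polygon also visits vertices that are neither $l$- nor $b$-vertices, the $l$- and $b$-vertices decompose into alternating paths beginning and ending with $b$-vertices, which requires strictly more $b$-vertices than $l$-vertices---impossible, as both counts equal $l-1$. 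If you want to salvage your approach, you essentially need to replace the global ray-parity condition by such a local adjacency condition; the ``0 or 2'' constraint you were aiming for is then exactly the statement that both neighbors of $l_i$ must be $b$-vertices, and the infeasibility follows from the path-decomposition count above rather than from a degree-sum estimate.
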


\begin{proof}
For fixed $n$ and $l\ge 1$ and $k=2l-2$ we define the set $B$ as follows (see Figure \ref{fig:lowerbound} for an illustration).
 We put two blue points $x$ and $y$ on the $x$-axis, $x$ being left from $y$.
 In the upper half-plane we put $n-2$ blue points $Z=\{z_1,z_2,...,z_{n-2}\}$ close to each other
such that  $Z'=\{x,y\} \cup Z$ are in convex position. Let us call a vertex in $Z$ a $z$-vertex.
Furthermore, we put $l-1$ blue points (not necessarily in convex position) to the interior of $Conv(Z')$ close to the $z$-vertices, we call them $b$-vertices.
 Next, we put $l$ red points in the interior of $Conv(Z')$,
all below the lines $xz_{n-2}$ and $yz_1$ such that together with
$x$ and $y$ they form a convex chain $xr_1r_2\ldots r_ly$. 
Finally, for each segment $r_ir_{i+1}$, we put a blue point $l_i$ a bit below its midpoint.
We call these $l$-vertices (lower blue vertices). This way we added $l-1$ more blue points.
Suppose that there exists a polygon $P$ through all the blue points excluding all the red points.
 Starting with a $b$-vertex we take the vertices of the polygon one by one until we reach an $l$-vertex,
 say $l_i$.
 The vertex preceding $l_i$ on the polygon cannot be $x$, as in this case $r_1$ would be in the interior of 
$P$, and similarly it cannot be $y$ as then $r_l$ would be in the interior of $P$.
 If it is a $z$-vertex then  $r_i$ or $r_{i+1}$ is inside $P$. Thus, it can be only a $b$-vertex.
Now, the vertex following $l_i$ on the polygon cannot be neither $x,y$ nor an $l$-vertex as in all of these cases  $r_i$ or $r_{i+1}$ would be inside $P$.
For the same reason it cannot be a $z$-vertex. Hence, it must be a $b$-vertex. Now, we find the next $l$-vertex on the polygon. Again, the vertex before and
after it must be a $b$-vertex.
Proceeding this way we see that every $l$-vertex is preceded and followed by a $b$-vertex.
As we have other vertices on the polygon too, it means that the number of $b$-vertices is at least one more than the number of $l$-vertices, a contradiction.
\end{proof}

The construction for $l=5, k=8$ is in Figure \ref{fig:lowerbound}.
We remark that the same construction without the $z$-vertices shows that Lemma \ref{lemma:capCup2} is not true,
if we require that $W(\mathcal{Z})$ contains at least $l-1$ blue points. The proof is similar.
Regarding the exact values for $K(l)$ for a small $l$, it is not hard to check, that 
for $l=1,2$: $K(1)=1$ and $K(2)=3$.

\begin{figure}[h]
\centering
\subfigure[]{
	\includegraphics[scale=0.4]{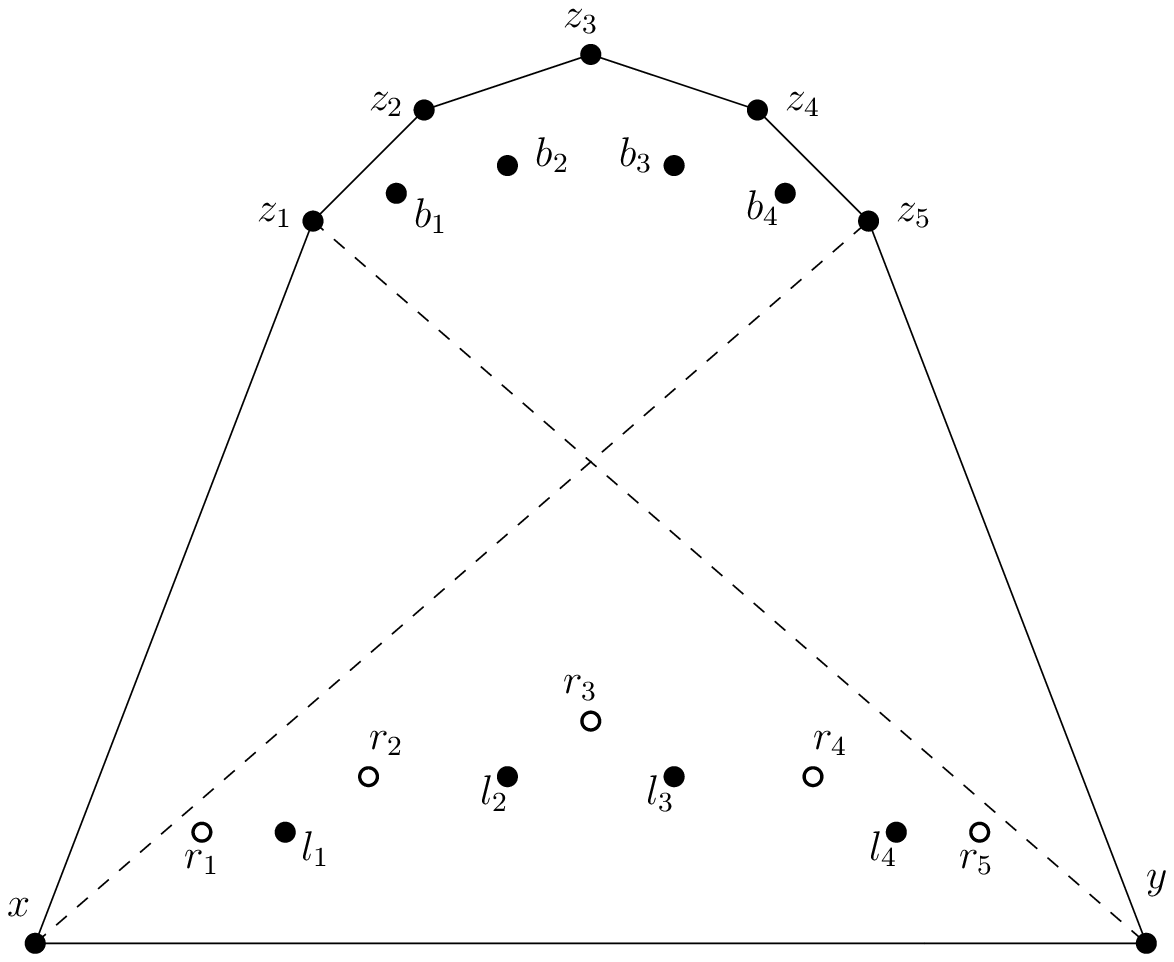}
	\label{fig:lowerbound}
	\hspace{5mm}
	}
\subfigure[]{
	\includegraphics[scale=0.4]{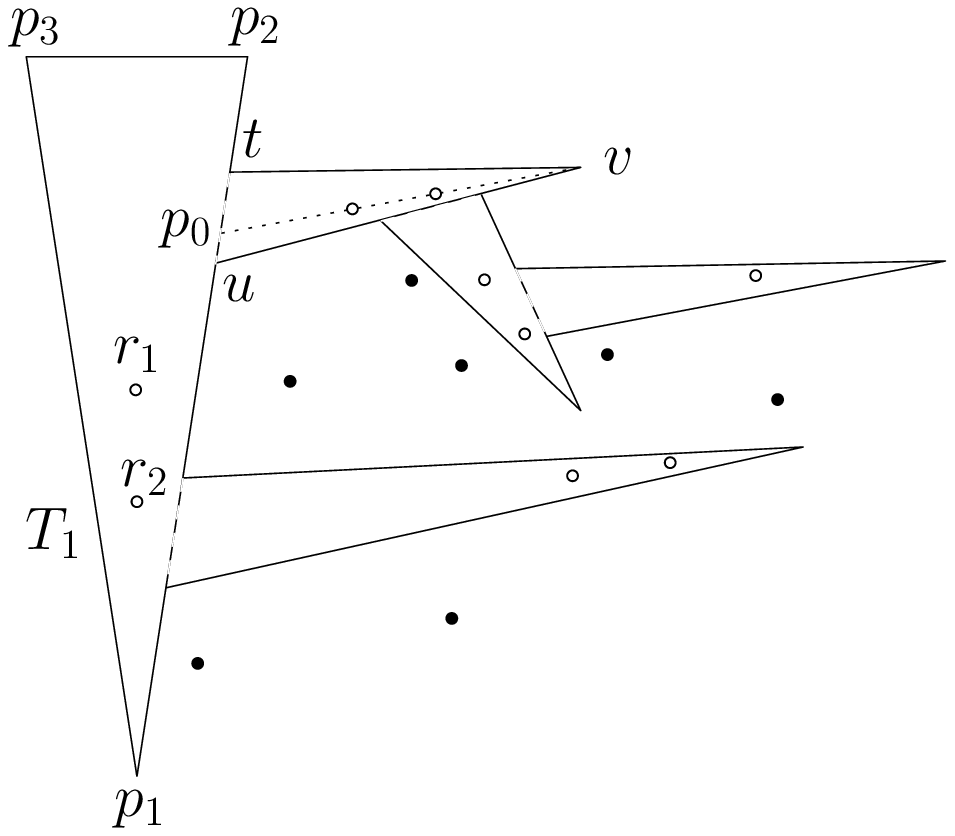}
	\label{fig:rbs}
	\hspace{5mm}
 }
 \subfigure[]{
	\includegraphics[scale=0.4]{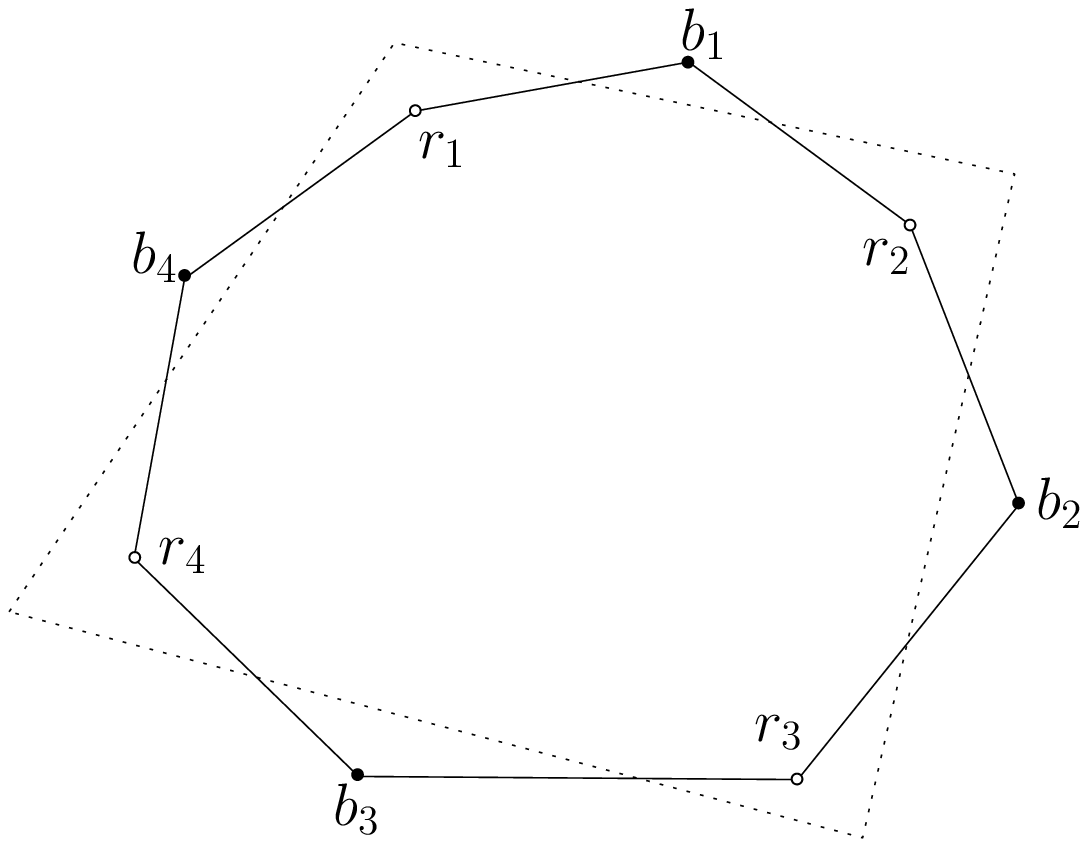}
	\label{fig:rbe} 
	}
\caption{
(a) The lower bound construction, (b) Construction of the red-blue separation, and (c) The lower bound construction for red-blue separation}
\end{figure}

\section{Red-blue separation} \label{sect:redblue}

\begin{theorem} \label{thm:redblue}
Let $B$ and $R$ be sets of $n$ blue and $n$ red points in the plane in general position. Then there exists a simple polygon with at most $3\lceil n/2\rceil$ sides that 
separates blue and red points. 

Also, for every $n$ there are sets $B$ and $R$ that cannot be separated by a polygon with less than $n$ sides. 
\end{theorem}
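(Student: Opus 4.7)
For the upper bound, my plan is to construct a separating polygon with $3m$ sides, where $m=\lceil n/2\rceil$, shaped as a chain of $m$ very thin triangles joined by $m-1$ very thin corridors, each triangle containing two blue points in its interior (a single blue point in one of the end triangles when $n$ is odd). I would fix a generic direction, sort the blue points by projection onto it, and pair consecutive blue points $(b_1,b_2),(b_3,b_4),\ldots$. Consider the polygonal path that traverses the blue points in this order; its odd-indexed edges are the pair segments and its even-indexed edges are the inter-pair segments. By general position no red point lies on any of these segments, so a sufficiently thin neighborhood of the path avoids all red points. Moreover, because the path is monotone in the chosen direction it is simple, and we have the freedom to make the neighborhood arbitrarily thin.

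I then replace each pair-segment by a thin triangle enclosing the two blue points of the pair, and each inter-pair segment by a thin quadrilateral corridor whose two short sides lie along a side of each of the adjacent triangles. In this arrangement every middle triangle shares two of its sides with its two neighboring corridors (meeting at a common apex vertex), contributing only one external side. A direct count gives
\[
\underbrace{2\cdot 2}_{\text{end triangles}} + \underbrace{1\cdot(m-2)}_{\text{middle triangles}} + \underbrace{2\cdot(m-1)}_{\text{corridors}} \;=\; 3m \;=\; 3\lceil n/2\rceil
\]
external sides, as required. The monotonicity of the sorting places the triangles and corridors in disjoint thin strips; choosing every thickness smaller than the minimum distance from any red point to the spine guarantees that the construction is a simple polygon containing every blue point and avoiding every red point. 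The main obstacle is precisely this geometric gluing at the common apex of each middle triangle, which is handled by an arbitrarily small choice of thickness together with the monotone layout.

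For the lower bound, my plan is to exhibit the explicit configuration illustrated in Figure~\ref{fig:rbe} of $n$ blue and $n$ red points for which any simple separating polygon must have at least $n$ sides. A natural such configuration places the $n$ blue points at the vertices of a convex $n$-gon together with $n$ red points positioned just inside the hull, each very close to the midpoint of a distinct edge of that $n$-gon, so that the separating polygon is forced to "dodge" each red point along the corresponding edge. To turn this picture into a proof, I would assign to each such red point a side of the separating polygon that is responsible for excluding it, and show that no single side can simultaneously dodge two of these blocking red points, giving at least $n$ sides in total. The key difficulty is establishing this injectivity of the assignment, which requires a careful visibility / local argument using the precise placement of the red points near the edges of the convex hull of $B$.
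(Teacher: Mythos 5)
Your upper bound construction has a gap precisely at the gluing step that produces the count $3\lceil n/2\rceil$. That count requires each corridor to be glued along a \emph{full} side of each adjacent triangle (otherwise the unglued remainders of the triangle sides survive as extra edges of the union, and the count inflates to roughly $5n/2$). But a triangle containing two blue points at distance $d$ in its interior has its longest side of length at least $d$, so whichever vertex you pick as the common apex, at least one of the two sides emanating from it has length at least $d/2$. The corridor glued along that side therefore has one end of width at least $d/2$: it is a quadrilateral stretching from roughly half of the segment $b_{2i-1}b_{2i}$ over to the neighbouring triangle, and this region cannot be made ``arbitrarily thin'' --- its width is dictated by the given point set, not by a free parameter --- so nothing prevents it from containing red points. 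The paper sidesteps this by enclosing the \emph{red} points instead: start with a triangle around $r_1,r_2$ and repeatedly attach a thin triangular finger emanating from a point where the line $r_{2i-1}r_{2i}$ meets the current boundary; since the two points to be swallowed lie \emph{on} the finger's axis, the finger genuinely can be made arbitrarily thin while still containing them, and each attachment adds exactly three sides.

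The lower bound is also incomplete, and your choice of configuration makes the missing step genuinely harder than it needs to be. With blue points in convex position and red points just inside the edge midpoints, the natural witness segments $b_1\rho_1,\rho_1 b_2,b_2\rho_2,\ldots$ form a \emph{non-convex} closed $2n$-gon (reflex at every red vertex), and a single straight side of the separating polygon can cross four or more of them --- for instance a segment running parallel to an edge $b_1b_2$ at depth between $0$ and the depth of $\rho_1$ crosses $b_1\rho_1$ and $\rho_1b_2$ and, continuing past $b_1$ and $b_2$, also crosses the witness segments of the two neighbouring edges before leaving the hull. So the ``no side serves two red points'' claim, which you correctly identify as the crux, is in fact false as a per-side crossing bound for your configuration, and a naive count only yields $n/2$. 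The paper's configuration removes the difficulty entirely: it places $r_1,b_1,r_2,b_2,\ldots,r_n,b_n$ in \emph{convex} position with alternating colours, so that every edge of this convex $2n$-gon has one endpoint inside and one outside any separating polygon and must be crossed, while a straight segment meets the boundary of a convex polygon in at most two points; the bound of $n$ sides is then immediate. If you want to keep your configuration, you must either prove a correct replacement for the injectivity claim or switch to an argument that does not count crossings side by side.
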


\begin{proof}
Let $R=\{r_1,\dots,r_n\}$, where $x(r_1)\leq x(r_2)\leq\dots\leq x(r_n)$. By choosing the coordinate system appropriately we can assume that $x(r_1)=x(r_2)=0$. 
Due to the general position we can find numbers $a,b>0$ large enough so that for certain $c>0$ the triangle $T_1$ (see Figure \ref{fig:rbs}) with vertices $p_1=(0,-a)$, $p_2=(c,b)$, $p_3=(-c,b)$ 
has the following properties:\begin{itemize}
\item $T_1$ contains $r_1$ and $r_2$ and does not contain any other red or blue points
\item all the lines $r_{2i-1}r_{2i}$ ($i=2,3,\dots$) intersect the boundary of $T_1$
\end{itemize}

We will proceed by enlarging the polygon $T_1$ adding to it in each step three new vertices so that the new polygon contains the next pair of red points and no blue points.
Since the line $r_3r_4$ intersects the boundary of $T_1$ at some point $p_0$ we can find two points
 $t$ and $u$ on the boundary of $T_1$ close enough to $p_0$ and a point $v$ on the line $r_3r_4$ close 
to one of the points $r_3, r_4$, so that the triangle $tuv$ can be joined with $T_1$ thereby creating a new polygon $T_2$ that contains the points $r_1,r_2,r_3,r_4$,
 and does not contain any other red or blue point.
Notice that the condition requiring, that any line determined by two consecutive red points intersects the boundary of $T_2$, is still satisfied,
 since it was already true for $T_1$. Observe that $T_2$ has 6 vertices.

 We can continue in this way by adding the pairs $r_i,r_{i+1}$ for $i=5,7,\dots,2\lfloor n/2\rfloor-1$ one by one. In the end we  get a polygon
 $T_{\lfloor n/2\rfloor}$, that contains
all the red points, except $r_n$ in case of odd $n$,  has $3\lfloor n/2\rfloor$ vertices, and does not contain any blue point. 
If $n$ is even, we are done. Otherwise we can add in the same manner three new 
vertices to $T_{\lfloor n/2\rfloor}$ in order to include $r_n$ as well.

Finally, let us show that we cannot always find a separating polygon with
 less than $n$ sides. Let $r_1,b_1,r_2,b_2,\dots,r_n,b_n$ be the vertices of a convex
 $2n$-gon appearing in that order on the circumference and set $R=\{r_1,\dots,r_n\}$ and
 $B=\{b_1,\dots,b_n\}$ (see Figure \ref{fig:rbe}). Let $P$ be any polygon that separates the two sets. Obviously, 
each of  the $2n$ segments $r_1b_1,b_1r_2,\dots,r_nb_n,b_nr_1$ must be intersected by a side of $P$.
 Since one side of $P$ can intersect simultaneously at most two of these segments,
 it follows that $P$ must have at least $n$ sides. 
\end{proof}

Note that the previous claim holds even if we have $n$ red points and arbitrary number of blue points.

\section{Concluding remarks}

Theorem \ref{thm:main2} in Section \ref{sec:total} proves the existence of a total blue polygonization excluding
 red points if we have enough inner blue points. We showed an upper bound on $K(l)$, the needed number of inner blue points, that is polynomial,
 but likely not tight.
%, which is also the case for our polynomial bound in both $n$ and $l$.
 We conjecture that the upper bound is $2l-1$, which meets the lower
 bound in Theorem \ref{thm:lowerbound}. 
%If $l\le 2$ then a case-analysis shows that the conjecture holds.
 If finding
 the right values of $K(l)$ and $K(n,l)$ for all $l,n$
 turns out to be out of reach, it is natural to ask the following.

\begin{question}
What is the right order of magnitude of $K(l)$ and $K'(n,l)$ ? 
%for which Theorems \ref{thm:main2} and \ref{thm:igor} hold?
\end{question}

One could obtain a better upper bound on $K(l)$ e.g. by proving Lemma \ref{lemma:capCup2}
with a weaker requirement on the number of blue points in $W(\mathcal{Z})$, which we suspect is possible.	
 
\begin{question}
Does Lemma \ref{lemma:capCup2} still hold, if we require only to have $\Omega(l)$ points in $W(\mathcal{Z})$, instead of $\Omega (l^2)$?
\end{question}

As mentioned in the introduction, the problem can be phrased as minimizing the number of enclosed red points. In this paper we only gave a bound on the number of inner blue points, 
beyond which this function is $0$. It would be natural to ask in general, what is the minimum of the function for a given point set.
 If Conjecture \ref{conj:main} holds, then it implies that having $l$ red points and $k<2l-1$ blue points, we can always find a
 polygonization with at least $\left\lfloor \frac{k+1}{2}\right\rfloor$ red points excluded. Indeed, just select, arbitrarily, this many red 
points and find a polygonization which excludes them. Unfortunately, at the moment we 
cannot find a lower bound meeting this conjectured upper bound.

\begin{question}
Can we find a point set $B\cup R$ for any $n,l$ and $k<2l-1$ such that every polygonization of $B$ excludes at most $\left\lfloor \frac{k+1}{2}\right\rfloor$ red points?
\end{question}

Finally, the bounds we have on the minimal number of sides for the red-blue separating polygon do not meet.

\begin{problem}
Improve the bounds $n$ or/and $3\lceil n/2\rceil$ in Theorem \ref{thm:redblue}.
\end{problem}

%The last problem we mention is less closely related to this topic but we think that it is an interesting question. 
%One of the main ingredients for the proofs in this paper was Lemma \ref{lemma:partition} proved in 
%\cite{Garcia}. We conjecture its following generalization to an arbitrary dimension.
%
%\begin{conjecture}
%Let $P$ be a set of points in general position in $\Bbb R^d$ and assume that $f_1,f_2,\dots,f_n$ are the facets of the $Conv(P)$ 
%and that there are $m$ interior points. Let $m=m_1+\dots+m_n$, where the
%$m_i$ are positive integers. Then the convex hull of $P$ can be partitioned into $n$ convex
%polytopes $Q_1,\dots,Q_n$ such that $Q_i$ contains exactly $m_i$ interior points and $f_i$ is a facet of $Q_i$.
%\end{conjecture}

\section{Acknowledgements}
We would like thank J\'anos Pach for helpful discussions regarding the subject and providing us with the relevant references. Also, we acknowledge Ferran Hurtado's communication with us about the state of the art of the problem.

\end{document}